\newcolumntype{V}{!{\vrule width 2pt}}
\numberwithin{equation}{section}
\def\blue{\textcolor{blue}}
\theoremstyle{plain}
\newtheorem{theorem}{Theorem}[section]
\newtheorem{lemma}[theorem]{Lemma}
\newtheorem{corollary}[theorem]{Corollary}
\newtheorem{Fact}[theorem]{Fact}
\theoremstyle{definition}
\newtheorem{Def}[theorem]{Definition}
\newtheorem{example}[theorem]{Example}
\newtheorem{remark}{Remark}
\newtheorem{?}[theorem]{Problem}
\newcommand{\N}{\mathbb{N}}
\newcommand{\Z}{\mathbb{Z}}
\newcommand{\s}{\mathfrak{S}}
\newcommand{\Q}{\mathcal{Q}}
\newcommand{\bQ}{\overline{\mathcal{Q}}}
\newcommand{\A}{\mathcal{A}}
\newcommand{\M}{\mathcal{M}}
\newcommand{\T}{\mathcal{T}}
\newcommand{\R}{\mathcal{R}}
\newcommand{\B}{\mathcal{B}}
\newcommand{\G}{\mathcal{G}}
\newcommand{\UT}{\mathcal{UT}}
\newcommand{\UB}{\mathcal{UB}}
\newcommand{\UG}{\mathcal{UG}}
\def\red{\mathrm{red}}
\def\des{\mathrm{des}}
\def\asc{\mathrm{asc}}
\def\cdes{\mathrm{cdes}}
\def\casc{\mathrm{casc}}
\def\plat{\mathrm{plat}}
\def\lleaf{\mathrm{leaf}^*}
\def\ddes{\mathrm{dd}}
\def\sdes{\mathrm{sd}}
\def\sddes{\mathrm{dsd}}
\def\emp{\mathrm{emp}}
\def\bar{\mathrm{bar}}
\def\dcdes{\mathrm{dcdes}}
\begin{document}

\title[Rooted quasi-Stirling permutations of general multisets]{Rooted quasi-Stirling permutations of general multisets}

\author[S. Fu]{Shishuo Fu}
\address[Shishuo Fu]{College of Mathematics and Statistics, Chongqing University, Huxi campus, Chongqing 401331, P.R. China}
\email{fsshuo@cqu.edu.cn}

\author[Y. Li]{Yanlin Li}
\address[Yanlin Li]{College of Mathematics and Statistics, Chongqing University, Huxi campus, Chongqing 401331, P.R. China}
\email{lly.1997112@foxmail.com}

\date{\today}

\begin{abstract}
Given a general multiset $\M=\{1^{m_1},2^{m_2},\ldots,n^{m_n}\}$, where $i$ appears $m_i$ times, a multipermutation $\pi$ of $\M$ is called {\em quasi-Stirling}, if it contains no subword of the form $abab$ with $a\neq b$. We designate exactly one entry of $\pi$, say $k\in \M$, which is not the leftmost entry among all entries with the same value, by underlining it in $\pi$, and we refer to the pair $(\pi,k)$ as a quasi-Stirling multipermutation of $\M$ rooted at $k$. By introducing certain vertex and edge labeled trees, we give a new bijective proof of an identity due to Yan, Yang, Huang and Zhu, which links the enumerator of rooted quasi-Stirling multipermutations by the numbers of ascents, descents, and plateaus, with the exponential generating function of the {\em bivariate Eulerian polynomials}. This identity can be viewed as a natural extension of Elizalde's result on $k$-quasi-Stirling permutations, and our bijective approach to proving it enables us to
\begin{itemize}
	\item prove bijectively a Carlitz type identity involving quasi-Stirling polynomials on multisets that was first obtained by Yan and Zhu.
	\item confirm a recent partial $\gamma$-positivity conjecture due to Lin, Ma and Zhang, and find a combinatorial interpretation of the $\gamma$-coefficients in terms of two new statistics defined on quasi-Stirling multipermutations called sibling descents and double sibling descents.
\end{itemize}
\end{abstract}

%\subjclass[2020]{}

\keywords{}

\maketitle

%\tableofcontents

%%%%%%%%%%%%%%%%%%%%%%%%%%%%%%%%%%%%%
\section{Introduction}\label{sect: intro}
%%%%%%%%%%%%%%%%%%%%%%%%%%%%%%%%%%%%%

Let $\s_n$ denote the set of permutations of $[n]:=\{1,2,\ldots,n\}$. For any $\pi=\pi_1\pi_2\cdots\pi_n\in\s_n$, we call position $i$, $1\le i\le n$, a {\em descent} of $\pi$, provided that $\pi_i>\pi_{i+1}$, where $\pi_{n+1}=\pi_0=0$, a convention that we follow in this paper. Denote $\des(\pi)$ the total number of descents of $\pi$. The polynomials $A_0(t)=1$, and
\begin{align}\label{comb:Eulerian poly}
A_n(t):=\sum_{\pi\in\s_n}t^{\des(\pi)}=\sum_{i=1}^{n}A_{n,i}t^i,
\end{align}
for $n\ge 1$, are the well known {\em Eulerian polynomials}, whose coefficient $A_{n,i}$ is called the {\em Eulerian number} and gives the number of permutations $\pi\in\s_n$ having exactly $i$ descents. The following relation is called the {\em Carlitz identity} in Petersen's book \cite{Petbook} and was known to Euler (see e.g.~\cite{Foa}). It can be used as an alternative definition of Eulerian polynomials.
\begin{align}\label{numerator:Eulerian poly}
\sum_{m=0}^{\infty}m^n t^m = \frac{A_n(t)}{(1-t)^{n+1}}.
\end{align}
Euler also derived the exponential generating function
\begin{align}\label{gf:Eulerian poly}
A(t,u):=\sum_{n=0}^{\infty}A_n(t)\frac{u^n}{n!}=\frac{1-t}{1-te^{(1-t)u}}.
\end{align}

When $m^n$ is replaced by the Stirling number of the second kind, say $S(m+n,m)$, in \eqref{numerator:Eulerian poly}, a formula analogous to \eqref{numerator:Eulerian poly} can be derived as
\begin{align}\label{numerator:Stirling perm}
\sum_{m=0}^{\infty}S(m+n,m)t^m=\frac{Q_n(t)}{(1-t)^{2n+1}},
\end{align} 
whose numerator polynomial $Q_n(t)$ together with its combinatorial interpretation, were introduced and studied by Gessel and Stanley \cite{GS} via the notion of \emph{Stirling permutations}. Namely, the set of Stirling permutations of order $n$, denoted as $\Q_n$, are the collection of permutations $\pi=\pi_1\pi_2\cdots\pi_{2n}$ of the multiset $\{1^2,2^2,\ldots,n^2\}$, subject to the condition that if $i<j<k$ and $\pi_i=\pi_k$, then we must have $\pi_j>\pi_i$. Two words $u=u_1\cdots u_m$ and $v=v_1\cdots v_m$ are said to be {\em order isomorphic}, if we have $u_i>u_j$ (resp.~$u_i=u_j$, $u_i<u_j$) if and only if $v_i>v_j$ (resp.~$v_i=v_j$, $v_i<v_j$) for all $1\le i<j\le m$. Using this notion of order isomorphism, $\pi$ is a Stirling permutation precisely when it contains no subword that is order isomorphic to $212$. $Q_n(t)$ can now be interpreted as the generating function over $\Q_n$ by the number of descents, and it will be referred to as the {\em Stirling polynomial} in the sequel.

There is a natural bijection, the so-called {\em Koganov-Janson correspondence} \cite{Kog,Jan,Eli}, between Stirling permutations and labeled increasing plane trees. In a recent work \cite{AGPS}, Archer et al. considered lifting the increasing restriction on the tree side and finding the counterpart on the permutation side via the aforementioned correspondence. They called this bigger set of permutations the {\em quasi-Stirling permutations}, which are permutations $\pi$ of $\{1^2,2^2,\ldots,n^2\}$ such that there exists no subword of $\pi$ that is order isomophic to $1212$ or $2121$. We denote the set of quasi-Stirling permutations of order $n$ as $\bQ_n$, and let $\overline{Q}_n(t)=\sum_{\pi\in\bQ_n}t^{\des(\pi)}$ be its descent polynomial (called quasi-Stirling polynomial in what follows), where for $\des(\pi)$ we assume the same convention $\pi_0=\pi_{2n+1}=0$.

Motivated by those classical results in the literature for Eulerian polynomials and Stirling polynomials, Elizalde successfully developed in his recent work \cite{Eli} several parallel or new results for quasi-Stirling polynomials, such as

\begin{align}\label{qStir-Euler}
(n+1)\overline{Q}_n(t) &= n![u^n]A(t,u)^{n+1},\; \text{and}\\
\label{numerator:qStir}
\sum_{m=0}^{\infty}m^n\binom{m+n}{m}t^m &= \frac{(n+1)\overline{Q}_n(t)}{(1-t)^{2n+1}}.
\end{align}

\begin{remark}
Note that in Elizalde's original formulation, the factor $(n+1)$ has been divided from both sides of equations \eqref{qStir-Euler} and \eqref{numerator:qStir}. Putting them in the present form, we are naturally led to consider the notion of rooted quasi-Stirling permutations, which we will introduce in the next section.
\end{remark}

Elizalde derived \eqref{qStir-Euler} by first establishing an implicit equation satisfied by the generating functions and then extracting the coefficients. He next utilized \eqref{qStir-Euler} to deduce \eqref{numerator:qStir}, a nice analogue for quasi-Stirling polynomials $\overline{Q}_n(t)$ of \eqref{numerator:Eulerian poly} for $A_n(t)$ and \eqref{numerator:Stirling perm} for $Q_n(t)$. He ended his paper \cite{Eli} by raising the problem of giving a combinatorial proof of \eqref{numerator:qStir} that is reminiscent of Gessel and Stanley's second proof of \eqref{numerator:Stirling perm}. This in turn
has motivated Yan and her collaborators to work out three papers \cite{YZ,YYHZ,YHY}. Among the results derived by Yan et al., we would like to highlight the following two identities \eqref{genqStir-Euler} and \eqref{numerator:genqStir}. Some definitions are needed to state these results.

Let $\A$, a subset of $\Z_{>0}$, be our alphabet. For a word $w=w_1\cdots w_n\in\A^n$, an index $i$, $0\le i\le n$, is an {\em ascent} (resp.~a {\em plateau}) of $w$ if $w_i<w_{i+1}$ (resp.~$w_i=w_{i+1}$), where we use the same convention that $w_0=w_{n+1}=0$. In particular, the empty word $\epsilon$ has one plateau coming from the initial and final $0$s that we have appended to $\epsilon$ by convention. The number of ascents (resp.~plateaus) of $w$ will be denoted as $\asc(w)$ (resp.~$\plat(w)$). 

Note that two order isomorphic words are indistinguishable, when we enumerate them with respect to various statistics such as $\des,~\asc,~\plat$, etc. It is convenient to introduce the {\em reduction map} ``$\red$''. Namely, for any word $w$ consisted of integers, we obtain the unique word $\red(w)$ of the same length by replacing the $i$-th smallest letter in $w$ by $i$. For instance, $\red(31355)=21233$. It is evident that two words $w$ and $v$ are order isomorphic, if and only if $\red(w)=\red(v)$.

The notion of quasi-Stirling permutations can be extended to any multiset $\M=\{1^{m_1},\ldots,n^{m_n}\}$. In view of the reduction map, we always assume without the loss of generality that each $m_i\ge 1$. Denote by $\bQ_{\M}$ the set of all quasi-Stirling permutations of $\M$. 
We shall consider the trivariate enumerator 

$$\overline{Q}_{\M}(x,y,z)=\sum_{\pi\in\bQ_{\M}}x^{\des(\pi)}y^{\asc(\pi)}z^{\plat(\pi)}.$$

Let $A(x,y,u)=\sum_{n\ge 0}A_n(x,y)\frac{u^n}{n!}$ be the generating function of the {\em bivariate Eulerian polynomial}

$$A_n(x,y)=\sum_{\pi\in\s_n}x^{\des(\pi)}y^{\asc(\pi)}.$$

Yan, Yang, Huang and Zhu derived in \cite[Coro.~1.5]{YYHZ} the following identity connecting $\overline{Q}_{\M}(x,y,z)$ with $A(x,y,u)$. 

\begin{theorem}\label{thm:YYHZ}
Let $\M=\{1^{m_1},2^{m_2},\ldots,n^{m_n}\}$ with $M=m_1+\cdots+m_n$. We have
\begin{align}\label{genqStir-Euler}
& (M-n+1)\overline{Q}_{\M}(x,y,z)=n![u^n](A(x,y,u)-1+z)^{M-n+1}.
\end{align}
\end{theorem}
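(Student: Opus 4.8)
The plan is to prove identity~\eqref{genqStir-Euler} bijectively by constructing a family of vertex- and edge-labeled trees that simultaneously encode a quasi-Stirling multipermutation of $\M$ together with the choice of a root, and then reading off the statistics $\des,\asc,\plat$ from the tree in a way that matches the right-hand side coefficient-by-coefficient. The guiding principle is the Koganov--Janson correspondence \cite{Kog,Jan,Eli}: a quasi-Stirling permutation of $\{1^2,\dots,n^2\}$ corresponds to a labeled plane tree, and I would seek the correct generalization of this correspondence to the multiset $\M$ in which a letter $i$ appearing $m_i$ times contributes a vertex of degree governed by $m_i$. The left factor $M-n+1$ should correspond precisely to the number of admissible roots (entries that are not leftmost among equal values, plus one additional ``free'' choice), which is exactly the normalization built into the definition of rooted quasi-Stirling multipermutations advertised in the abstract.

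First I would set up the tree model. Given $\pi\in\bQ_\M$, I would recursively parse $\pi$ by its first-return decomposition at each repeated letter, attaching a subtree for each maximal block enclosed between two equal letters; the vertices would be labeled by the values in $\M$ and the edges by the ``gaps'' between consecutive letters. The key is to check that each of the three statistics is local to the tree: a descent, an ascent, or a plateau of $\pi$ corresponds to a specific type of parent--child or sibling comparison in the labeled tree. Second, I would interpret the right-hand side $n![u^n](A(x,y,u)-1+z)^{M-n+1}$ via the exponential formula: raising $A(x,y,u)-1+z$ to the power $M-n+1$ and extracting $n![u^n]$ counts sequences of $M-n+1$ structures, where each factor is either an Eulerian-permutation piece (contributing $A(x,y,u)-1$, i.e.\ a nonempty permutation weighted by $x^{\des}y^{\asc}$) or a ``plateau placeholder'' (contributing $z$, i.e.\ the empty word with its single plateau). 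This is exactly the decomposition of the rooted tree into $M-n+1$ labeled blocks hanging off a spine determined by the root.

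The heart of the argument is to exhibit a weight-preserving bijection between $\bQ_\M \times \{\text{admissible roots}\}$ and the set of $(M-n+1)$-tuples of labeled linear orders on a partition of $[n]$, under which $x^{\des}y^{\asc}z^{\plat}$ on the permutation side equals $\prod x^{\des}y^{\asc}$ (with an extra $z$ for each empty factor) on the tuple side. I would verify that rooting the tree at a chosen half-edge cuts the cyclic/planar structure into exactly $M-n+1$ ordered pieces, the $n![u^n]$ accounting for the distribution of the $n$ distinct labels among the blocks. A useful sanity check is the two special cases: setting $z=y$ and each $m_i=2$ should recover Elizalde's \eqref{qStir-Euler}, and the degenerate count $M=n$ (all multiplicities one) should reduce to a statement purely about $\s_n$ and $A_n(x,y)$. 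The main obstacle I anticipate is ensuring the bijection respects the quasi-Stirling (no-$abab$) condition while allowing arbitrary multiplicities $m_i$: unlike the $m_i=2$ case, a letter appearing $m_i>2$ times nests several blocks at one vertex, and I must confirm that the plateau statistic is correctly apportioned among these nested blocks so that the generating function for a single vertex of multiplicity $m_i$ contributes the right power structure. Getting the plateau bookkeeping to align exactly with the $z$-placeholders in the expansion of $(A(x,y,u)-1+z)^{M-n+1}$ is where the careful combinatorial analysis will be needed.
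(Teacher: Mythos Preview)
Your high-level setup is right: interpret the left side via rooted quasi-Stirling multipermutations (so that the factor $M-n+1$ counts admissible roots), pass to a tree model, and read the right side as a generating function over $\B_{n,k}$, the set of ordered partitions of $[n]$ into $k=M-n+1$ permutation-blocks, with $(\des,\asc,\emp)$ tracking $(x,y,z)$. The paper does exactly this. But two of your structural claims do not survive contact with the details, and they are precisely where the paper has to work hardest.

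First, the $M-n+1$ blocks on the right side are not obtained by ``rooting the tree at a chosen half-edge and cutting the cyclic/planar structure into $M-n+1$ pieces along a spine.'' In the paper's tree model $\T_{\M}$ there are $M-n+1$ integer-labeled \emph{vertices} $0,1,\ldots,M-n$, and the $i$-th block of $\Pi\in\B_{n,k}$ records the (distinct) labels of edges ending at vertex $i$. So the blocks are the local stars at all vertices simultaneously, not segments along a path from the root. The role of the root choice is subtler: it only affects which vertex is linear rather than cyclic, and the paper absorbs this choice via a nontrivial map $\psi_1:\UT_{\M}\to\UG_{\M}$ (at the unordered level) that uses anchor vertices and a Foata-style rearrangement of the path from $0$ to an outdegree-$0$ vertex. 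Your proposal does not contain any mechanism of this kind.

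Second, and more seriously, you aim for a \emph{weight-preserving} bijection sending $(\des,\asc,\plat)$ on a single rooted multipermutation to $(\des,\asc,\emp)$ on a single $\Pi\in\B_{n,k}$. The paper explicitly checks that its bijection $\Psi$ is \emph{not} weight-preserving pointwise (their running example has $\cdes(T)=8$ but $\des(\Psi(T))=9$). The reason is that at every non-root vertex the edge to the parent participates, so the local contribution is governed by the \emph{cyclic} Eulerian polynomial $A_m^{(c)}(x,y)$ rather than $A_m(x,y)$. The match only holds after summing over equivalence classes (rearranging siblings at each vertex on the tree side, rearranging letters inside each block on the partition side), using the algebraic identity $A_m^{(c)}(x,y)=mA_{m-1}(x,y)$. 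Without this cyclic-versus-linear bookkeeping and the equivalence-class argument, your plan cannot close, and your outline gives no indication that you anticipated this obstruction (the ``plateau bookkeeping'' you flag is actually the easy part: integer-labeled leaves correspond to empty blocks and hence to the $z$-placeholders). If you want a genuinely pointwise bijection you would need a different construction than the paper's; nothing in your proposal suggests one.
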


Note that setting $m_1=m_2=\cdots=m_n=k$ in \eqref{genqStir-Euler} recovers Elizalde's result \cite[Eq.~(22)]{Eli} for $k$-quasi-Stirling permutations, which further reduces to \eqref{qStir-Euler} in the case of $k=2$. Moreover, we remark again that the original form of \eqref{genqStir-Euler} in \cite{YYHZ} (as well as the form of \eqref{numerator:genqStir} below in \cite{YZ}) divides the factor $M-n+1$ from both sides of the equation. This distinction, albeit cosmetic when viewed algebraically, leads us to a completely different combinatorial approach from that of \cite{YYHZ}. Relying on the insight we gained from this new bijective proof of Theorem~\ref{thm:YYHZ}, we are able to give a new bijective proof of the following Carlitz type identity for $\overline{Q}_{\M}(t):=\overline{Q}_{\M}(t,1,1)$, which first appeared as Theorem 1.2 in Yan and Zhu's paper \cite{YZ}.

\begin{theorem}\label{thm:YZ}
Let $\M=\{1^{m_1},2^{m_2},\ldots,n^{m_n}\}$ with $M=m_1+\cdots+m_n$. We have
\begin{align}
\label{numerator:genqStir}
& \sum_{m\ge 0}\binom{M-n+m}{m}m^n t^m = \frac{(M-n+1)\overline{Q}_{\M}(t)}{(1-t)^{M+1}}.
\end{align}
\end{theorem}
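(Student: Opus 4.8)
The plan is to derive Theorem~\ref{thm:YZ} from the already-established exponential generating function identity of Theorem~\ref{thm:YYHZ} by a short coefficient computation, and then to indicate how the same statement should be read bijectively in the spirit of the paper. First I would specialize \eqref{genqStir-Euler} at $y=z=1$. Since $A_n(x,1)=\sum_{\pi\in\s_n}x^{\des(\pi)}=A_n(x)$, the bivariate series collapses to the ordinary Eulerian generating function, so $A(x,1,u)=A(x,u)$ as in \eqref{gf:Eulerian poly}, and the summand $-1+z$ vanishes. Writing $r:=M-n+1$, Theorem~\ref{thm:YYHZ} then reduces to
\[
(M-n+1)\overline{Q}_{\M}(t)=n![u^n]\,A(t,u)^{\,r},
\]
so the task becomes matching this right-hand side against the claimed Carlitz sum.

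The key computation uses the closed form \eqref{gf:Eulerian poly}. Writing $A(t,u)^{r}=(1-t)^{r}\,(1-t\,e^{(1-t)u})^{-r}$ and expanding by the negative binomial theorem gives
\[
A(t,u)^{r}=(1-t)^{r}\sum_{m\ge 0}\binom{M-n+m}{m}t^{m}e^{m(1-t)u}.
\]
Extracting the coefficient through $n![u^n]e^{m(1-t)u}=m^n(1-t)^n$, and using $(1-t)^{r}(1-t)^{n}=(1-t)^{M+1}$, yields
\[
n![u^n]A(t,u)^{r}=(1-t)^{M+1}\sum_{m\ge 0}\binom{M-n+m}{m}m^n t^m.
\]
Combining with the previous display and dividing by $(1-t)^{M+1}$ is exactly \eqref{numerator:genqStir}. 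As a sanity check, the case $r=1$ (that is, $M=n$, so each $m_i=1$) reproduces the classical Carlitz identity \eqref{numerator:Eulerian poly}.

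Within this algebraic route there is essentially no obstacle once Theorem~\ref{thm:YYHZ} is granted: the only points to verify are the specialization $A(x,1,u)=A(x,u)$ and the routine binomial expansion and coefficient extraction. The genuinely substantive task, and the one matching the paper's stated aim of a bijective proof, is to realize both sides combinatorially without passing through the closed form. Here I would interpret $\binom{M-n+m}{m}m^n$ as the number of pairs consisting of a weak composition of $m$ into $M-n+1$ nonnegative parts together with a function $[n]\to[m]$, expand $(1-t)^{-(M+1)}$ as a product of geometric series, and build a $P$-partition–style correspondence attaching to each rooted quasi-Stirling multipermutation (equivalently, to each vertex- and edge-labeled tree of the paper) precisely the bounded monotone sequences whose strict rises are forced at its descents. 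The main difficulty in that approach is engineering the compatible-sequence model so that the strict-versus-weak inequalities track $\des$ exactly under the sentinel convention $\pi_0=\pi_{M+1}=0$, and so that the root/leftmost-entry bookkeeping produces the normalizing factor $M-n+1$ rather than $M-n$; this is the step where the labeled-tree bijection underlying Theorem~\ref{thm:YYHZ} is expected to do the real work.
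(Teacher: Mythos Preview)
Your algebraic derivation is correct and is exactly the route the paper alludes to as ``the quickest way'' (specialize $y=z=1$ in \eqref{genqStir-Euler}, plug in the closed form \eqref{gf:Eulerian poly}, and follow Elizalde's coefficient extraction), though the paper does not write out these steps. The proof the paper actually presents is the bijective one \`a la Gessel--Stanley: first use \eqref{genqStir-Euler} together with \eqref{gf:Bnk} to rewrite the right-hand side as $\sum_{\Pi\in\B_{n,k}}t^{\des(\Pi)}/(1-t)^{M+1}$, then interpret this as the bar-counting generating function over \emph{barred partitions} $\overline{\B}_{n,k}$ (each block is a barred permutation with at least one bar at every descent), and finally count barred partitions having exactly $m$ bars by (i) distributing the $m$ bars among the $k=M-n+1$ blocks in $\binom{M-n+m}{m}$ ways and (ii) placing the integers $1,\ldots,n$ into the $m$ bar-delimited slots in $m^n$ ways. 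Your bijective sketch points in the same direction---your reading of $\binom{M-n+m}{m}m^n$ is the same pair of choices---but the paper routes the argument through $\B_{n,k}$ rather than through the rooted quasi-Stirling permutations themselves; this buys a clean blockwise reduction to the classical barred-permutation picture and entirely avoids the sentinel/root bookkeeping you flag as the main obstacle in a direct $P$-partition style construction.
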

% As was first shown by Elizalde \cite{Eli}, \eqref{quasi-Stirling as numerator} can be derived from \eqref{id:quasi=Eulerian power} by extracting the coefficient of $z^n$ in $A(t,z)^{n+1}$.

The third main result of this paper, which also follows from our proof of \eqref{genqStir-Euler}, is the following partial $\gamma$-positive expansion for $\overline{Q}_{\M}(x,y,z)$. 

\begin{theorem}\label{thm:pargamma}
For any multiset $\M=\{1^{m_1},2^{m_2},\ldots,n^{m_n}\}$ with $M=m_1+\cdots+m_n$, the polynomial $\overline{Q}_{\M}(x,y,z)$ is partial $\gamma$-positive and has the expansion
\begin{align}\label{pargamma of quasiStir}
\overline{Q}_{\M}(x,y,z)=\sum_{i=0}^{M-n}z^i\sum_{j=1}^{\lfloor\frac{M+1-i}{2}\rfloor}\gamma_{\M,i,j}(xy)^j(x+y)^{M+1-i-2j},
\end{align}
where
\begin{align}\label{pargamma-coef}
\gamma_{\M,i,j}=\#\{\pi\in\bQ_{\M}:\plat(\pi)=i,~\sdes(\pi)=j,~\sddes(\pi)=0\}.
\end{align}
\end{theorem}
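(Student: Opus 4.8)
The plan is to establish the three features bundled into \eqref{pargamma of quasiStir} in turn: the homogeneous grading by $z$, the $\gamma$-positivity of each graded block, and the stated combinatorial meaning of $\gamma_{\M,i,j}$. First I would record the grading identity. For any $\pi\in\bQ_{\M}$, each of the $M+1$ slots of the augmented word $0\pi_1\cdots\pi_M0$ is exactly one of an ascent, a descent, or a plateau, so $\asc(\pi)+\des(\pi)+\plat(\pi)=M+1$. Consequently the coefficient of $z^i$ in $\overline{Q}_{\M}(x,y,z)$ is $\sum_{\pi:\,\plat(\pi)=i}x^{\des(\pi)}y^{\asc(\pi)}$, which is homogeneous of degree $M+1-i$ in $x,y$. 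Since a plateau can arise only from two adjacent equal entries and the value $k$ supplies at most $m_k-1$ such adjacencies, we get $0\le i\le M-n$, pinning down the range of the outer sum.

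The heart of the argument is a valley-hopping (modified Foata--Strehl) group action carried out inside each fixed-$\plat$ sector. Working through the vertex- and edge-labeled trees produced in the proof of Theorem~\ref{thm:YYHZ} --- where the positions of the repeated copies of a value cut out sibling subtrees --- I would define, for each \emph{togglable} site $v$, an involution $\varphi_v$ that locally reverses an ascent into a descent (or vice versa) while never merging or splitting a block of equal entries. In particular each $\varphi_v$ preserves $\plat$, keeps $\pi$ quasi-Stirling, and leaves the sibling valleys and sibling peaks untouched. The commuting involutions $\{\varphi_v\}$ generate a $\Z_2$-action whose orbits I would show are indexed by their unique member carrying no double sibling descent, i.e.\ by $\{\pi:\sddes(\pi)=0\}$; this is precisely the canonical element in which every togglable site has been set to an ascent.

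With the action in hand, the expansion drops out orbit by orbit. Fix an orbit and let $\pi^\ast$ be its representative, with $\plat(\pi^\ast)=i$, $\sdes(\pi^\ast)=j$ and $\sddes(\pi^\ast)=0$. The sibling valleys and sibling peaks of $\pi^\ast$ are frozen throughout the orbit and account for a fixed factor $(xy)^j$, while each of the remaining $M+1-i-2j$ togglable sites contributes an independent factor $(x+y)$ as $\varphi_v$ exchanges its ascent/descent status. Hence $\sum_{\pi\in\text{(orbit)}}x^{\des(\pi)}y^{\asc(\pi)}=(xy)^j(x+y)^{M+1-i-2j}$, and summing over all orbits --- equivalently over all representatives $\pi$ with $\plat(\pi)=i$, $\sdes(\pi)=j$, $\sddes(\pi)=0$ --- yields \eqref{pargamma of quasiStir} with $\gamma_{\M,i,j}$ equal to the cardinality in \eqref{pargamma-coef}.

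The main obstacle is making the action genuinely well defined on \emph{multipermutations}. Unlike the classical situation on $\s_n$, repeated values force a decision about how equal entries behave under a hop: the involution must move a whole sibling block rather than a single letter, must never create or destroy an $abab$ pattern, and must hold the plateau count exactly constant. Getting the bookkeeping of sibling ascents and descents right --- so that each free site really contributes $(x+y)$ and the orbit representatives biject cleanly with $\{\sddes=0\}$ --- is where the tree model underlying Theorem~\ref{thm:YYHZ} becomes indispensable, and I expect the careful verification of these invariants to be the most delicate part of the proof.
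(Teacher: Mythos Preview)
Your plan differs substantially from the paper's proof. Rather than constructing a valley-hopping action directly on $\bQ_{\M}$, the paper transfers the entire problem to $\B_{n,k}$ via Theorem~\ref{thm:FL}: by \eqref{genqStir-Euler} and \eqref{gf:Bnk} one has $(M-n+1)\overline{Q}_{\M}(x,y,z)=\sum_{\Pi\in\B_{n,k}}x^{\des(\Pi)}y^{\asc(\Pi)}z^{\emp(\Pi)}$, and each block of a partition $\Pi\in\B_{n,k}$ is a permutation of \emph{distinct} integers. Hence the classical Foata--Strehl action \eqref{gamma-coef:biEuler} applies block by block with no multiset complications, giving $(M-n+1)\gamma_{\M,i,j}=\#\{\Pi\in\B_{n,k}:\emp(\Pi)=i,\ \des(\Pi)=j,\ \ddes(\Pi)=0\}$. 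The interpretation \eqref{pargamma-coef} then follows from the single Fact that the bijection $\phi\circ\Psi^{-1}:\B_{n,k}\to\R_{\M}$ carries $(\emp,\des,\ddes)$ to $(\plat,\sdes,\sddes)$, after which one divides by $M-n+1$ since these three statistics are independent of the root. The paper's route thus sidesteps precisely the obstacle you yourself flag as ``the most delicate part'': it never hops on multipermutations at all.

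Your direct approach is not wrong in spirit, but as written it is a plan rather than a proof. The involutions $\varphi_v$ are never actually constructed; the assertions that they commute, preserve $abab$-avoidance, fix $\plat$, and that each orbit has a unique $\sddes=0$ representative with orbit sum $(xy)^j(x+y)^{M+1-i-2j}$ are all stated but not verified. To carry this out you would need to specify exactly which sites are togglable (presumably the type~I sibling ascents/descents of Definition~\ref{def:sdes}), describe the hop on the tree model concretely, and check all invariants. The paper's detour through $\B_{n,k}$ buys all of this for free by reducing to the classical case on $\s_m$.
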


Note that the nonnegativity of the coefficients $\gamma_{\M,i,j}$ was previously conjectured by Lin, Ma, and Zhang \cite{LMZ}, and was first confirmed by Yan, Huang, and Yang \cite{YHY}. In that same paper Yan et al. also provided a combinatorial interpretation of $\gamma_{\M,i,j}$ that is different from the one we give here in \eqref{pargamma-coef}. The meaning of partial $\gamma$-positivity and the definitions of the {\em sibling descent} and {\em double sibling descent} (denoted respectively as $\sdes$ and $\sddes$ in \eqref{pargamma-coef}) will be introduced in the final section, where Theorem~\ref{thm:pargamma} will be proved as well. 

For the rest of the paper, we first introduce in section \ref{sect:tree} certain vertex and edge labeled trees, as well as the notion of rooted quasi-Stirling permutations of general multisets. These two kinds of combinatorial objects are in natural bijection with each other. Building on this bijection, we present new bijective proofs of Theorems \ref{thm:YYHZ} and \ref{thm:YZ} in section \ref{sect:thm1-2}.

\section{VE-labeled trees}\label{sect:tree}

Recall that the Koganov-Janson correspondence mentioned in the introduction links Stirling permutations with labeled increasing plane trees, where the labels are placed on every edge. On the other hand, Yan et al. utilized certain vertex-labeled plane trees in both of their papers \cite{YYHZ,YZ}. For our purpose, it is convenient to consider certain plane rooted trees where both vertices and edges are labeled. The main goal of this section is to introduce this new tree model and the rooted quasi-Stirling multipermutations. We should remark that the use of this tree model could be bypassed entirely, but we believe that making use of it enhances the readability and makes several terminologies self-explanatory. 

All the trees considered in this paper (ordered or unordered) will be rooted. Each non-root vertex, say $v$, in a tree $T$ has a unique vertex connected to it that is the closest vertex to $v$ on the path from $v$ to the root of $T$. We call this unique vertex the {\em parent} of $v$, and denote it as $p_T(v)$, or simply $p(v)$ when the tree (or the graph) under consideration is clear from the context. $v$ is then called a {\em child} of $p(v)$. Two vertices are called {\em siblings} if they share the same parent, and the two edges connecting them to this parent are said to be {\em sibling edges} of each other as well. Take the tree in Fig.~\ref{fig:VE-tree} for example, the labeled vertices $10$ and $11$ are siblings with the vertex $8$ being their common parent. The vertices $9$ and $s_6$ are also siblings of each other, where the use of $s_6$ will be explained in Definition~\ref{def:VE-tree}. All edges in a tree are thought of as pointing towards the root, so that the edge $\overrightarrow{uv}$ is said to be starting at $u$ and ending at $v$, and the tree itself is viewed as a directed graph. For instance, the edge labeled $2$ in Fig.~\ref{fig:VE-tree} starts at vertex $1$ and ends at vertex $9$. We are now ready to give the first key definition of this paper. Recall that $\chi(S)=1$ if the statement $S$ is true and $\chi(S)=0$ otherwise.

\begin{Def}\label{def:VE-tree}
Given any multiset $\M=\{1^{m_1},\ldots,n^{m_n}\}$ with $M=m_1+\cdots+m_n$, we denote $\T_{\M}$ the set of {\em vertex and edge labeled trees} (abbreviated as {\em VE-labeled trees} in what follows) over $\M$. These are plane rooted trees with $M-\sum_{1\le i\le n}\chi(m_i>1)$ edges that satisfy the following conditions.
\begin{enumerate}
	\item The labels of vertices are all distinct and form precisely the set $[M-n]_0\bigcup S_{\M}$, where $$[M-n]_0:=\{0,1,2,\ldots,M-n\}, \text{ and } S_{\M}:=\{s_i:m_i=1\}.$$
	We use letter $s$ with subscript $i$, so that the label $i$ from $[M-n]_0$ and the singleton $i\in\M$ could be distinguished.
	\item $S_{\M}$ is called the set of {\em singletons} of $\M$. A vertex receives a label $s_i\in S_{\M}$ if and only if it is a leaf which starts an edge that has label $i$.
	\item Every edge receives a unique label form the multiset $\M\setminus\{i\in [n]:m_i>1\}$. Edges with the same label must be adjacent sibling edges.
	\item The integer-labeled vertices and the labels of edges starting at them are {\em compatible} in the following sense. For edges with the same label, their starting vertices are increasingly labeled from left to right. For two edges labeled $e_1$ and $e_2$ ($\neq e_1$) that start at vertices with integer labels $v_1$ and $v_2$ respectively, we must have that $e_1<e_2$ if and only if $v_1<v_2$.
\end{enumerate}
The trees in $\T_{\M}$ whose roots are labeled as $0$ are said to be {\em regular}. They form a subset which we denote as $\T_{\M}^0$.
\end{Def}

The reader is encouraged to use the tree in Fig.~\ref{fig:VE-tree}, whose labels of all the vertices have been colored blue, to check all the conditions in Definition~\ref{def:VE-tree}.

\begin{remark}\label{rmk:conversion}
It should be pointed out that as a consequence of condition (4), once we fix the label of the root, the labeling of all the edges implies uniquely the eligible labeling for the vertices and vice versa. Especially in the case of $\M=\{1^2,2^2,\ldots,n^2\}$, i.e., the original quasi-Stirling permutations as introduced by Archer et al. \cite{AGPS}, there is a one-to-one correspondence between the vertex-labels and edge-labels (although an obvious shift of values is needed when the root is not at $0$). In that case, the labels of vertices are indeed redundant and once they are dropped we get back to the edge-labeled trees used by Elizalde \cite{Eli}. However, in our situation with general multiset $\M$, it makes our later constructions of bijections easier by labeling vertices as well. 
\end{remark}

\begin{figure}[htb]
\begin{tikzpicture}[scale=0.4, decoration={markings, mark= at position 0.5 with {\arrow{stealth}}}
]
\draw[postaction={decorate}] (18,10) -- (18,12);
\draw[postaction={decorate}] (18,12) -- (18,14);
\draw[postaction={decorate}] (18,14) -- (20,16);
\draw[postaction={decorate}] (22,14) -- (20,16);
\draw[postaction={decorate}] (20,16) -- (24,20);
\draw[postaction={decorate}] (24,16) -- (24,20);
\draw[postaction={decorate}] (24,14) -- (24,16);
\draw[postaction={decorate}] (28,16) -- (24,20);
\draw[postaction={decorate}] (26,14) -- (28,16);
\draw[postaction={decorate}] (30,14) -- (28,16);
\draw[postaction={decorate}] (28,12) -- (30,14);
\draw[postaction={decorate}] (32,12) -- (30,14);
\draw[postaction={decorate}] (32,10) -- (32,12);

\node at (18,10) {$\bullet$};
\node at (18,12) {$\bullet$};
\node at (18,14) {$\bullet$};
\node at (20,16) {$\bullet$};
\node at (22,14) {$\bullet$};
\node at (24,20) {$\bullet$};
\node at (24,16) {$\bullet$};
\node at (24,14) {$\bullet$};
\node at (28,16) {$\bullet$};
\node at (26,14) {$\bullet$};
\node at (30,14) {$\bullet$};
\node at (28,12) {$\bullet$};
\node at (32,12) {$\bullet$};
\node at (32,10) {$\bullet$};

\node at (21.2,18) {$7$};
\node at (24.5,18) {$7$};
\node at (26.7,18) {$7$};
\node at (17.6,9.5) {\blue{$s_1$}};
\node at (18.5,11) {$1$};
\node at (17.4,12) {\blue{$1$}};
\node at (18.5,13) {$2$};
\node at (17.4,14) {\blue{$9$}};
\node at (18.6,15.4) {$8$};
\node at (19.5,16) {\blue{$6$}};
\node at (21.2,15.4) {$6$};
\node at (21.8,13.5) {\blue{$s_6$}};
\node at (24,20.6) {\blue{$0$}};
\node at (23.5,16) {\blue{$7$}};
\node at (24.5,15) {$4$};
\node at (23.5,14) {\blue{$4$}};
\node at (27.4,16) {\blue{$8$}};
\node at (26,13.4) {\blue{$10$}};
\node at (29.2,14) {\blue{$11$}};
\node at (27.4,14.9) {$9$};
\node at (29.4,15.3) {$9$};
\node at (29.3,12.8) {$3$};
\node at (31.5,13.2) {$3$};
\node at (27.5,11.8) {\blue{$2$}};
\node at (31.5,11.8) {\blue{$3$}};
\node at (32.5,10.9) {$5$};
\node at (31.5,9.8) {\blue{$5$}};

\end{tikzpicture}
\caption{The VE-labeled tree $T$ corresponding to $\phi(T)=78212867447993355397$} 
%with $\M=\{1,2^2,3^3,4^2,5^2,6^4,7^2,8,9^3\}$}
\label{fig:VE-tree}
\end{figure}
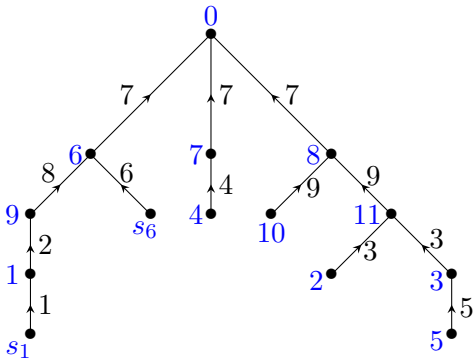

\begin{Def}\label{def:r-coding}
Given a tree $T\in \T_{\M}$ whose root is labeled $r$, we call the correspondence between edge labels and vertex labels the {\em $r$-coding} of the multiset $\M$. More generally, any graph whose edge labels and vertex labels are linked in the same way as $r$-coding is said to be {\em consistent} with $r$-coding.
\end{Def} 

This $r$-coding can be described explicitly. Namely, we first line up integers in $\M$ increasingly from left to right (integers of the same value are distinguished by the subscripts), and encode them one-by-one as $0,1,\ldots,r-1,r+1,\ldots,M-n$, skipping the first copy of each integer (including those singletons), then we encode the singleton $i$ by $s_i$. For the multiset associated with the tree in Fig.~\ref{fig:VE-tree}, its $0$-coding is presented in Table~\ref{coding}. We usually refer to $0$-coding as the standard coding of the multiset $\M$ and denote it as $c$, while the $r$-coding with $r\ge 1$ is said to be shifted and denoted as $c_r$. 
\begin{Def}\label{def:congruent}
Two vertices having the same preimage under the map $c_r$ are said to be {\em congruent with respect to $r$-coding}.
\end{Def} 

For example in Table~\ref{coding}, vertices labeled $6,7,8$ are all congruent with respect to $0$-coding.
\begin{table}[ht]
\centering
\begin{tabular}{cVc|c|c|c|c|c|c|c|c|c|c|c|c|c|c|c|c|c|c|c}
$i$ & $1$ & $2_1$ & $2_2$ & $3_1$ & $3_2$ & $3_3$ & $4_1$ & $4_2$ & $5_1$ & $5_2$ & $6$ & $7_1$ & $7_2$ & $7_3$ & $7_4$ & $8_1$ & $8_2$ & $9_1$ & $9_2$ & $9_3$\\
\Xhline{2pt}
$c(i)$ & $s_1$ & & $1$ & & $2$ & $3$ & & $4$ & & $5$ & $s_6$ & & $6$ & $7$ & $8$ & & $9$ & & $10$ & $11$
\end{tabular}
\vspace{3mm}
\caption{The $0$-coding of the multiset $\M=\{1,2^2,3^3,4^2,5^2,6,7^4,8^2,9^3\}$.}
\label{coding}
\end{table}

According to Definition~\ref{def:VE-tree}, each integer in $[M-n]_0$ could be used as the root label. Aside from $0$, these are precisely the images of the non-first copies of non-singletons in $\M$, under the standard coding function $c$. This observation gives rise to the following definition, which defines the object in the title of this paper.

\begin{Def}\label{def:rooted qStir}
Given any multiset $\M=\{1^{m_1},\ldots,n^{m_n}\}$ with $M=m_1+\cdots+m_n$, we denote $\R_{\M}$ the set of pairs $(\pi,k_j)$, where $\pi\in\bQ_{\M}$ and either $k_j=0$, or $k_j$ is the $j$-th copy of $k$ for certain $1\le k\le n$ and $2\le j\le m_k$. We call the pair $(\pi,k_j)$ a {\em quasi-Stirling multipermutation of $\M$ rooted at} $k_j$, and it can be succinctly represented by underlining the $j$-th (counting from left to right) copy of $k$ in $\pi$. The special case of $k_j=0$ can be thought of as either unrooted or rooted at $\pi_{M+1}=0$.
\end{Def}

\begin{remark}
It is clear that $\bQ_{\M}$ can be naturally embedded in $\R_{\M}$, corresponding to those pairs with $k_j=0$. Further note that due to the condition $2\le j\le m_k$, there are exactly $M-n+1$ (including the choice of $0$) choices for $k_j$, once the permutation $\pi$ is given. Thus we see immediately that
\begin{align}\label{R=bQ}
|\R_{\M}|=(M-n+1)|\bQ_{\M}|,
\end{align}
which is precisely the sum of coefficients for the left hand side of \eqref{genqStir-Euler}.  
\end{remark}

Note that underlining an entry of $\pi$ does not effect the numbers of descents, ascents and plateaus of $\pi$, so these three statistics extend to the pair $(\pi,k_j)$. In order to enumerate rooted quasi-Stirling multipermutations with respect to the statistics $\des$, $\asc$, and $\plat$ using VE-labeled trees, we recall the following definitions from \cite{Eli}.

Define the number of {\em cyclic descents} and {\em cyclic ascents} of a sequence of nonnegative integers $\pi=\pi_1\pi_2\ldots\pi_r$ to be

\begin{align*}
\cdes(\pi)=|\{i\in[r]:\pi_i>\pi_{i+1}\}|, \text{ and } \casc(\pi)=|\{i\in[r]:\pi_i<\pi_{i+1}\}|,
\end{align*} 
respectively, with the convention $\pi_{r+1}=\pi_1$ (not the usual convention $\pi_{r+1}=0$). So for example $\cdes(31221)=2$ while $\des(31221)=3$.

Let $T\in\T_{\M}$ and let $v$ be a vertex of $T$. Suppose the edges between $v$ and its children are labeled $a_1,a_2,\ldots,a_d$ from left to right, and the edge between $v$ and its parent, if any, is labeled as $b$. If $v$ is the root, define $\cdes(v)$ (resp.~$\casc(v)$) to be $\des(a_1\ldots a_d)$ (resp.~$\asc(a_1\ldots a_d)$). Otherwise, $\cdes(v)=\cdes(ba_1\ldots a_d)$ (resp.~$\casc(v)=\casc(ba_1\ldots a_d)$). Next, define the numbers of cyclic descents and cyclic ascents of $T$ to be
\begin{align*}
\cdes(T)=\sum_v \cdes(v), \text{ and } \casc(T)=\sum_v \casc(v),
\end{align*}
respectively, where both sums range over all the vertices $v$ of $T$. Finally, define $\lleaf(T)$ to be the number of integer-labeled leaves of $T$.

We are now ready for the main result of this section, which can be viewed as the first step towards proving \eqref{genqStir-Euler} bijectively. The reader is invited to use the tree in Fig.~\ref{fig:VE-tree} as one example of the bijection $\phi$ constructed below.

\begin{theorem}\label{thm:bijTtoR}
There exists a bijection $\phi: \T_{\M}\rightarrow \R_{\M}$, which induces a bijection between $\T_{\M}^0$ and $\bQ_{\M}$. Moreover, if $(\pi,k_j)=\phi(T)$, then we have
\begin{align}
\label{cdes=des}\cdes(T) &=\des(\pi),\\
\label{casc=asc}\casc(T) &=\asc(\pi),\\
\label{lleaf=plat}\lleaf(T) &=\plat(\pi).
\end{align}
\end{theorem}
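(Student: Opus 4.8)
The plan is to construct $\phi$ explicitly by a depth-first traversal of the plane tree $T$ that reads off a word, and to track the three statistics locally, vertex by vertex.

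\textbf{Construction of $\phi$.} Given $T\in\T_{\M}$, I would perform a standard preorder (depth-first, left-to-right) walk around the tree, recording edge labels as they are traversed. Concretely, since the Koganov--Janson style correspondence reads a Stirling-type word by writing down each edge label when the walk first descends into that edge and again when it returns, I would read the label of edge $e$ both on the way down and on the way up, producing a word in which each edge label appears exactly twice except that singletons (leaves labeled $s_i$, whose edge is labeled $i$) contribute their label in a controlled single-or-double fashion dictated by Definition~\ref{def:VE-tree}(2). The resulting word $w$ is a multipermutation of $\M$; the condition that equal edge labels must be adjacent sibling edges (Definition~\ref{def:VE-tree}(3)) is exactly what forces $w$ to avoid the patterns $1212$ and $2121$, so $w\in\bQ_{\M}$. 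To produce the root marking $k_j$, I would use the root label $r$ of $T$: by the discussion preceding Definition~\ref{def:rooted qStir}, $r\in[M-n]_0$ corresponds under the standard coding $c$ either to $0$ (when $r=0$, giving the regular/unrooted case) or to a specific non-first copy $k_j$ of some non-singleton $k$. I set $\phi(T)=(w,k_j)$ with this $k_j$. Restricting to $\T_{\M}^0$ (root labeled $0$) yields exactly the pairs with $k_j=0$, i.e.\ the embedded copy of $\bQ_{\M}$, giving the claimed induced bijection.

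\textbf{Bijectivity.} I would prove $\phi$ is a bijection by constructing the inverse directly: given $(\pi,k_j)\in\R_{\M}$, the quasi-Stirling (i.e.\ $1212$- and $2121$-avoiding) condition guarantees that the "matching" pairs of equal entries nest into a laminar family, which reconstructs a unique plane tree whose edges are labeled by the entries of $\pi$; the marked copy $k_j$ fixes the root label $r=c(k_j)$, and then Remark~\ref{rmk:conversion} (condition (4) of Definition~\ref{def:VE-tree}) shows the vertex labels are uniquely and compatibly determined once the root label is fixed. I would verify that reading $w$ off this reconstructed tree returns $\pi$, so the two maps are mutually inverse. The count \eqref{R=bQ} serves as a useful sanity check that the cardinalities match.

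\textbf{Statistics.} For the three identities \eqref{cdes=des}--\eqref{lleaf=plat}, the strategy is to localize each descent/ascent/plateau of $\pi$ to a unique vertex of $T$ and match it with the local quantity $\cdes(v)$, $\casc(v)$, or the leaf contribution. When the walk is at a vertex $v$ with child-edges labeled $a_1,\dots,a_d$ (left to right) and parent-edge $b$, the entries of $\pi$ read around $v$ form precisely the cyclic word $ba_1\cdots a_d$ (or $a_1\cdots a_d$ at the root), because the walk enters along $b$, visits the children in order, and exits along $b$; this is exactly why the \emph{cyclic} statistics $\cdes$, $\casc$ (with convention $\pi_{r+1}=\pi_1$) appear rather than the linear ones, and why the boundary convention $\pi_0=\pi_{M+1}=0$ matches the root having no parent. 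Summing the local descents over all vertices then recovers $\des(\pi)$, and likewise for ascents; for plateaus, a plateau $\pi_i=\pi_{i+1}$ occurs exactly when the walk reaches a leaf (the descent-then-immediate-ascent of a leaf edge), so $\plat(\pi)=\lleaf(T)$ after accounting for the singleton/initial-final-$0$ conventions.

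\textbf{Main obstacle.} I expect the genuinely delicate step to be the bookkeeping for singletons: a singleton $i$ has $m_i=1$ and is encoded by a leaf vertex $s_i$ starting an edge labeled $i$ (Definition~\ref{def:VE-tree}(2)), so the edge label $i$ appears only once in $\M$ yet the preorder walk traverses every edge twice. I will have to specify carefully how singleton edges contribute to the word $w$ and how their single appearance interacts with the plateau count and with the boundary $0$s, ensuring both that $w$ is a bona fide multipermutation of $\M$ and that the plateau--leaf correspondence \eqref{lleaf=plat} holds exactly. Verifying that this singleton convention is consistent under both $\phi$ and its inverse—and that it does not disturb the cyclic-statistic matching at the parent of $s_i$—is where the proof will require the most care.
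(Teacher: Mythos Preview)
Your approach is essentially the paper's: preorder traversal to read off the word, standard coding to determine the root, and vertex-local accounting for the statistics. However, you have missed one of the two adjustment steps, and it is not the one you flagged.

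You write that each edge label appears exactly twice in the raw traversal word, with only singletons requiring special handling. This is wrong once some $m_i\ge 3$: a non-singleton $i$ contributes $m_i-1$ sibling edges, so the raw traversal records $2(m_i-1)$ copies of $i$, not $m_i$. The paper fixes this with a second collapsing rule you did not anticipate: when two \emph{distinct} sibling edges with the same label are traversed consecutively (returning from one child and immediately descending to the next), the resulting $aa$ is replaced by a single $a$. There are exactly $m_i-2$ such collisions for each non-singleton $i$, which brings the count down to $2(m_i-1)-(m_i-2)=m_i$. Your ``main obstacle'' paragraph treats the singleton case as the delicate one, but the sibling-collapse is at least as essential and is entirely absent from your construction.

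This same omission propagates to your plateau argument. You assert that a plateau occurs ``exactly when the walk reaches a leaf,'' but after the correct adjustments there are three kinds of consecutive $aa$ in the raw word: sibling collisions (collapsed), singleton leaves (collapsed), and integer-labeled leaves (kept). Only the last survive as plateaus in $\pi$, which is precisely why the tree statistic is $\lleaf$ (integer-labeled leaves only) and not the total leaf count. Without the sibling-collapse rule in hand, your local plateau--leaf matching does not go through.
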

\begin{proof}
Given a tree $T\in\T_{\M}$, we explain how to construct its image $(\pi,k_j)$ under $\phi$. We consider two cases according to the label of the root of $T$.
\begin{itemize}
	\item[Case I.\phantom{I}] $T\in\T_{\M}^0$, i.e., the root of $T$ is labeled $0$. In this case, let the image be $\phi(T)=(\pi,0)$, where the multipermutation $\pi$ is constructed as follows. We traverse the edges of $T$ by following a depth-first walk from left to right (also known as the {\em preorder traversal}). Namely, starting from the root, we go to the leftmost child and explore that branch recursively, return to the root, then move on to the next child, and so on (see \cite[Fig.~5-14]{Sta} for a pictorial illustration). Recording the edge labels as they are traversed produces a word $w$, which is not our final output $\pi$ yet. For each consecutively repeated pair $aa$ in $w$, we make the following adjustments accordingly.
	\begin{enumerate}[i]
		\item If this pair records labels from two edges that were tranversed consecutively (these two edges must be ending at the same vertex), replace $aa$ with $a$.
		\item If this pair records the same edge being tranversed twice consecutively, and this edge starts at a singleton-labeled leaf, replace $aa$ with $a$.
		\item If this pair records the same edge being tranversed twice consecutively, and this edge starts at an integer-labeled leaf, keep $aa$ as is.
	\end{enumerate}
	The new word we get after these adjustments is taken to be the multipermutation $\pi$. Recall that the tree $T$ has $M-\sum_{1\le i\le n}\chi(m_i>1)$ edges. The following calculation of the length of $\pi$ reflects the adjustments and verifies that $\pi$ indeed is a permutation of the multiset $\M$. The fact that $\pi$ is quasi-Stirling is guaranteed by the condition (3) in Definition \ref{def:VE-tree}.
	\begin{align*}
	& 2(M-\sum_{1\le i\le n}\chi(m_i>1))-\sum_{1\le i\le n}\chi(m_i=1)-\sum_{1\le i\le n}(m_i-2)\chi(m_i>1)\\
	& =2M-\sum_{1\le i\le n}m_i=M.
	\end{align*}
	\item[Case II.] The root of $T$ is labeled $r\in[M-n]$. Apply the same preorder traversal of the edges of $T$ as in case I to get the multipermutation $\pi$. Next, set $k_j=c^{-1}(r)$, the preimage of $r$ under the standard coding function $c$ of $\M$. This gives us the image $\phi(T)=(\pi,k_j)$.
\end{itemize}

Conversely, if we are given a rooted quasi-Stirling multipermutation $(\pi,k_j)$, we first reverse the tree traversal process to get the edge labeled tree $T$ from $\pi$, then use the standard coding function to get the root label $c(k_j)$. This root label, together with the edge labels of $T$, are sufficient for us to deduce the remaining vertex labels for $T$ (see Remark~\ref{rmk:conversion}). Hence $\phi$ is seen to be a bijection.

Next, to verify \eqref{cdes=des} and \eqref{casc=asc}, we carry out a case-by-case discussion on the types of descents (resp.~ascents) appearing in $\pi$, analogous to the proof of Lemma~2.1 in \cite{Eli}. The details are omitted.

Finally, \eqref{lleaf=plat} follows from the discussion of three cases i, ii, iii of pair $aa$ in the transition from $w$ to $\pi$ in Case I above, since the only situation that a plateau is preserved as we adjust $w$ to get $\pi$, is the case iii, which happens exactly when an integer-labeled leaf is traversed.
\end{proof}

\section{Bijective proofs of Theorems~\ref{thm:YYHZ} and \ref{thm:YZ}}\label{sect:thm1-2}

In this section, we give a new bijective proof of Theorem~\ref{thm:YYHZ}. This approach is also applicable to \eqref{numerator:genqStir}, giving us a unified treatment of both Theorems~\ref{thm:YYHZ} and \ref{thm:YZ}.

We begin by analyzing the right hand side of \eqref{genqStir-Euler}. For a vector $\mathbf{a}=(a_1,a_2,\ldots,a_l)\in \N^l$ consisting of $l$ nonnegative integers, we define the following two statistics:
\begin{align*}
|\mathbf{a}| &=a_1+a_2+\cdots+a_l,\\
|\mathbf{a}|_{\scaleto{0}{3.5pt}} &= |\{1\le i\le l:a_i=0\}|.
\end{align*}
Denoting $k:=M-n+1$ in the right hand side of \eqref{genqStir-Euler}, we have
\begin{align}
n![u^n](A(x,y,u)-1+z)^k &= \sum_{\mathbf{a}\in\N^{k},\:|\mathbf{a}|=n}\binom{n}{a_1,\ldots,a_k}z^{|\mathbf{a}|_{\scaleto{0}{3pt}}}\prod_{1\le i\le k,\:a_i>0}A_{a_i}(x,y) \nonumber \\
&=\sum_{\Pi\in\B_{n,k}}x^{\des(\Pi)}y^{\asc(\Pi)}z^{\emp(\Pi)},\label{gf:Bnk}
\end{align}
where $\B_{n,k}$ is the set of partitions $\Pi$ of $[n]$ into $k$ (possibly empty) blocks, such that each block itself is written as a permutation of the integers it contains. Moreover, the permutation statistics $\des$, $\asc$, and $\plat$ naturally extend to $\B_{n,k}$. Namely, for $\Pi=(\pi^{(0)},\ldots,\pi^{(k-1)})$, we let
$$\des(\Pi)=\sum_{i=0}^{k-1} \des(\pi^{(i)}),\; \asc(\Pi)=\sum_{i=0}^{k-1} \asc(\pi^{(i)}),\; \emp(\Pi)=\sum_{i=0}^{k-1} \plat(\pi^{(i)}).$$
Recalling the convention that only the empty permutation $\epsilon$ has one plateau, we see that $\emp(\Pi)$ is effectively the number of empty blocks in $\Pi$.
%Equivalently, $\B_{n,k}$ can be viewed as the set of {\em barred permutations} of $[n]$, which are permutations with $k-1$ bars inserted (\redt{mention connection with Yan's approach?}).

Combining \eqref{R=bQ}, Theorem~\ref{thm:bijTtoR}, and \eqref{gf:Bnk}, it is clear that Theorem~\ref{thm:YYHZ} is equivalent to the following identity:

\begin{align}\label{triplestat:T=B}
\sum_{T\in\T_{\M}}x^{\cdes(T)}y^{\casc(T)}z^{\lleaf(T)}=\sum_{\Pi\in\B_{n,k}}x^{\des(\Pi)}y^{\asc(\Pi)}z^{\emp(\Pi)}.
\end{align}

Ideally, one would expect a bijection from $\T_{\M}$ to $\B_{n,k}$ which transforms the triple statistics $(\cdes,\casc,\lleaf)$ over trees to $(\des,\asc,\emp)$ over partitions. This is unfortunately not the case with our bijection $\Psi$ constructed in the next theorem. For instance, the tree $T$ in Fig.~\ref{fig:VE-tree} has $\cdes(T)=8$, while its image $\Psi(T)=\Pi$ has $\des(\Pi)=9$. Nonetheless, this bijection works well when we consider trees and partitions in their equivalence classes, not individually.

Two VE-labeled trees $T$ and $T'$ in $\T_{\M}$ are said to be equivalent, denoted as $T\sim T'$, if for each vertex label $0\le i\le k-1=M-n$, the (labeled) edges ending at $i$ in $T'$ are just rearrangements of the edges ending at $i$ in $T$. All trees equivalent to a given tree $T$ form an equivalence class, denoted as $[T]$. Analogously, two partitions $\Pi,\Pi'\in\B_{n,k}$ are said to be equivalent, if the $i$-th block (written as a permutation) in $\Pi'$ is a rearrangement of the $i$-th block in $\Pi$, for $0\le i\le k-1$. The equivalence class containing $\Pi$ is denoted as $[\Pi]$.

\begin{theorem}\label{thm:FL}
Let $\M=\{1^{m_1},\ldots,n^{m_n}\}$ with $M=m_1+\cdots+m_n$ and $k=M-n+1$. There is a bijection $\Psi:\T_{\M}\rightarrow \B_{n,k}$, such that if $\Pi=\Psi(T)$, then we have
\begin{align}\label{eq:triple stat equiv}
\sum_{T'\in[T]}x^{\cdes(T')}y^{\casc(T')}z^{\lleaf(T')}=\sum_{\Pi'\in[\Pi]}x^{\des(\Pi')}y^{\asc(\Pi')}z^{\emp(\Pi')}.
\end{align}
Consequently, equations \eqref{triplestat:T=B} and \eqref{genqStir-Euler} hold in turn.
\end{theorem}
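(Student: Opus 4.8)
The plan is to define $\Psi$ by recording, at each integer-labeled vertex, the plane sequence of its distinct children edge-labels. By conditions (2)--(3) of Definition~\ref{def:VE-tree} every value $j\in[n]$ labels one contiguous run of sibling edges sharing a single parent vertex, so listing the distinct children-values of the vertex labeled $i$ from left to right produces a word $\pi^{(i)}$ on a set $B_i\subseteq[n]$, and the sets $B_0,\dots,B_{k-1}$ form a partition of $[n]$. I then set $\Psi(T)=\Pi=(\pi^{(0)},\dots,\pi^{(k-1)})\in\B_{n,k}$, with an empty block arising exactly at each integer-labeled leaf. To see this is a bijection I would build the inverse directly: from $\Pi$ one reads off every edge together with its label, its parent vertex, and its plane position, whereupon condition (4) forces the starting (child) vertex of each edge, so the whole VE-tree is recovered, its root being the unique integer vertex that starts no edge. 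The point requiring care here is verifying that this reconstruction always returns a \emph{connected, acyclic} tree lying in $\T_{\M}$ with the root label correctly pinned down; granting this, $\Psi$ is a two-sided bijection, and it visibly carries the class $[T]$ onto $[\Psi(T)]$.

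Next I would prove the class identity \eqref{eq:triple stat equiv} by factoring both sides. On the right, $[\Pi]$ is obtained by independently reordering the entries inside each block, so
\[
\sum_{\Pi'\in[\Pi]}x^{\des(\Pi')}y^{\asc(\Pi')}z^{\emp(\Pi')}=z^{\emp(\Pi)}\prod_{i:\,B_i\neq\emptyset}A_{|B_i|}(x,y).
\]
On the left, $[T]$ is obtained by independently reordering the distinct-valued sibling groups beneath each integer vertex; since the starting-vertex labels depend on edge labels rather than on plane position, such a reordering leaves every label and every subtree intact and alters only the local word of the vertex $v$ it is performed at, hence only $\cdes(v)$ and $\casc(v)$, while $\lleaf$ is constant on $[T]$ and equals $\emp(\Pi)$ by the definition of $\Psi$. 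Thus the left side factors as $z^{\lleaf(T)}\prod_v L_v(x,y)$, where $L_v$ sums $x^{\cdes(v)}y^{\casc(v)}$ over the orderings at $v$. The root has a $0$-padded local word, so its $(\cdes,\casc)$ already agree with the $(\des,\asc)$ of a permutation and $L_{\mathrm{root}}=A_t(x,y)$; every integer-labeled leaf gives $L_v=A_0(x,y)=1$ and feeds the factor $z$. The identity then collapses to a single local claim about the remaining non-root vertices.

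That local claim is the crux, and I expect it to be the main obstacle: for a non-root vertex with parent-edge label $b$ and $t$ distinct children-values $c_1,\dots,c_t$,
\[
L_v=\sum_{\sigma\in\s_t}x^{\cdes(b\,c_{\sigma(1)}\cdots c_{\sigma(t)})}\,y^{\casc(b\,c_{\sigma(1)}\cdots c_{\sigma(t)})}=A_t(x,y),
\]
\emph{independently of the value $b$}. Because the cyclic word $b\,c_{\sigma(1)}\cdots c_{\sigma(t)}$ has $t+1$ pairwise distinct letters, every cyclic adjacency is strict, whence $\cdes+\casc=t+1$ there, exactly as $\des+\asc=t+1$ for a $0$-padded permutation of length $t$; it therefore suffices to match the single cyclic-descent distribution. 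I would prove this by producing, for each rank of $b$ among the $t+1$ values, a descent-preserving bijection between the orderings $\sigma$ and $\s_t$, obtained by cutting the cycle at $b$ and reinterpreting the two boundary comparisons with $b$ as the single $0$-boundary of a linear permutation, then checking that the resulting statistic is Eulerian regardless of the rank of $b$ (the independence of that rank being visible through order-reversal, under which $A_t(x,y)$ is invariant by its $x\leftrightarrow y$ symmetry). This is the cyclic analogue of Elizalde's Lemma~2.1.

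Granting the local claim, \eqref{eq:triple stat equiv} holds for every $T$. Since $\Psi$ maps $[T]$ bijectively onto $[\Psi(T)]$, summing \eqref{eq:triple stat equiv} over a transversal of the equivalence classes of $\T_{\M}$ exhausts the classes of $\B_{n,k}$ exactly once and yields \eqref{triplestat:T=B}. Finally, as already recorded in the text, \eqref{triplestat:T=B} is equivalent to \eqref{genqStir-Euler} through the bijection $\phi$ of Theorem~\ref{thm:bijTtoR} together with \eqref{R=bQ} and the expansion \eqref{gf:Bnk}, so Theorem~\ref{thm:YYHZ} follows in turn.
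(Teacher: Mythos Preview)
Your proposed $\Psi$ is not a bijection, and the difficulty you flag as ``requiring care'' is in fact fatal. Take $\M=\{1^2,2^2\}$, so $k=3$, the integer vertices are $0,1,2$, and there is one edge of each label $1,2$. The tree $T_1$ with root $0$ and path $1\to 2\to 0$ (edge $1$ from vertex $1$ to vertex $2$, edge $2$ from vertex $2$ to vertex $0$) and the tree $T_2$ with root $2$ and path $1\to 0\to 2$ (edge $2$ from vertex $1$ to vertex $0$, edge $1$ from vertex $0$ to vertex $2$) are both legitimate members of $\T_{\M}$ satisfying condition~(4), and your map sends both to $\Pi=(2,\epsilon,1)$. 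Conversely, $\Pi=(1,\epsilon,2)$ lies in $\B_{2,3}$ but is not in the image: for each choice of root $r\in\{0,1,2\}$, the starting-vertex assignment forced by condition~(4) produces a self-loop. So your $\Psi$ is neither injective nor surjective.

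The reason is that your map records only the parent function $j\mapsto(\text{vertex at which the edges labeled }j\text{ end})$; this forgets the root, and different roots can realize the same parent function as a tree while some parent functions are not realized by any tree at all. The paper confronts exactly this obstruction. It shows that the parent-function correspondence is a bijection $\psi_2$ not onto trees but onto the set $\UG_{\M}$ of \emph{regular graphs} (functional digraphs consistent with $0$-coding, possibly containing cycles), and then builds a separate, nontrivial bijection $\psi_1:\UT_{\M}\to\UG_{\M}$ via anchor vertices and a Foata-type rewiring of the path from $0$ (Theorem~\ref{thm:unorder}). Only the composite $\psi_2\circ\psi_1$, lifted to ordered objects, gives a genuine bijection $\Psi$ with matching block sizes at each vertex.

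Your treatment of the class identity \eqref{eq:triple stat equiv} is fine and coincides with the paper's: the ``local claim'' $L_v=A_t(x,y)$ is precisely the content of the lemma $A^{(c)}_{t+1}(x,y)=(t+1)A_t(x,y)$ together with the cyclic invariance of $(\cdes,\casc)$, so that fixing the first letter $b$ contributes $A^{(c)}_{t+1}(x,y)/(t+1)$. But that argument only applies once one has a bijection carrying $[T]$ onto $[\Pi]$ with $|B_i|$ equal to the number of distinct children-labels at vertex $i$, and your candidate does not furnish one.
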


As it turns out, the construction of the bijection $\Psi$ is irrelevant to either the orders between sibling edges of the trees in $\T_{\M}$, or the orders between integers inside the same block of the partitions in $\B_{n,k}$. The proof of Theorem~\ref{thm:FL} thus hinges on its unordered version. We make this precise by first giving the following two definitions.
\begin{Def}
For any given multiset $\M$, let $\UT_{\M}$ denote the set of {\em unordered VE-labeled trees} over $\M$. These are trees satisfying all conditions (1)--(4) in Definition~\ref{def:VE-tree}, except that we ignore the orders between sibling edges. Similarly, let $\UB_{n,k}$ denote the set of usual set partitions of $[n]$ into $k$ (possibly empty) blocks, i.e., each block is viewed as a subset, not a permutation as in $\B_{n,k}$.
\end{Def}

\begin{Def}
For any given multiset $\M=\{1^{m_1},\ldots,n^{m_n}\}$, let $\G_{\M}$ denote the set of {\em regular graphs} over $\M$. These are directed and VE-labeled plane graphs satisfying all the labeling conditions (1)--(4) in Definition~\ref{def:VE-tree}, and vertex $0$ has outdegree $0$, while all other vertices have outdegree $1$. The unordered (i.e., ignoring the orders between sibling edges) regular graphs over $\M$ form a set denoted as $\UG_{\M}$.
\end{Def}

\begin{remark}
Note that $\T_{\M}\cap\G_{\M}=\T_{\M}^0$. Moreover, a key feature of the regular graphs over $\M$, is that they are consistent with the $0$-coding of $\M$. Therefore, for the sake of simplicity, we shall only label the vertices when we draw a regular graph (such as the graph in Fig.~\ref{fig:psi2} and the third graph in Fig.~\ref{fig:unordered VE-tree}), as long as the underlying multiset $\M$ is given.
\end{remark}

\begin{theorem}\label{thm:unorder}
Let $\M=\{1^{m_1},\ldots,n^{m_n}\}$ with $M=m_1+\cdots+m_n$ and $k=M-n+1$. There is a three-way correspondence
\begin{align}
\UT_{\M}\stackrel{\psi_1}{\longrightarrow}\UG_{\M}\stackrel{\psi_2}{\longrightarrow}\UB_{n,k},
\end{align}
where both $\psi_1$ and $\psi_2$ are bijections. Moreover, suppose $T\in\UT_{\M}$, $G=\psi_1(T)\in\UG_{\M}$, and $\Pi=\psi_2(G)\in\UB_{n,k}$, then for each $i\in[M-n]_0$, the following three sets are equinumerous:
\begin{enumerate}
	\item the edges with distinct labels ending at vertex labeled $i$ in $T$;
	\item the edges with distinct labels ending at vertex labeled $i$ in $G$;
	\item the integers contained in the block $\pi^{(i)}$ of $\Pi$.
\end{enumerate}
\end{theorem}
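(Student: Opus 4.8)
The plan is to treat the two maps very asymmetrically: $\psi_2$ is essentially bookkeeping, while $\psi_1$ carries all the content. Throughout I would encode a regular graph $G\in\UG_\M$ by its \emph{value--parent function} $q_G\colon[n]\to[M-n]_0$, where $q_G(v)$ is the common endpoint of all edges labeled $v$. This is well defined: same-label edges share a parent by condition (3), and that parent is integer-labeled because singletons are leaves by condition (2). Since the $0$-coding determines the starting vertex of every edge, the assignment $G\mapsto q_G$ identifies $\UG_\M$ with the set of all functions $[n]\to[M-n]_0$, which is exactly $\UB_{n,k}$ once we declare block $\pi^{(i)}=q_G^{-1}(i)$. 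This renders $\psi_2$ and its inverse transparent, and it makes sets (2) and (3) literally the same set of labels, so their equinumerosity needs no argument.

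For $\psi_1$ I would first strip the singleton leaves: each $s_v$ is a leaf hanging at $q(v)$ and is simply carried along, reducing everything to the integer vertex set $V=[M-n]_0$. On $V$, the objects in $\T_\M$ become rooted trees (root any $r\in V$, edges directed toward $r$, subject to the bundle constraint that congruent vertices share a parent), while those in $\UG_\M$ become functional graphs with $0$ as the unique sink. The natural candidate is the classical Joyal/cycle-lemma bijection: given $T$ rooted at $r$, follow the path $0=x_0\to x_1\to\cdots\to x_m=r$ of successive parents, set $R=\{x_1,\dots,x_m\}$, reassign the out-edges on $R$ by $f(y_i)=x_i$ where $y_1<\cdots<y_m$ is $R$ sorted, keep $f(u)=p_T(u)$ for every other $u\neq0$, and make $0$ the sink. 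This transfers the edgeless vertex from $r$ to $0$ while opening the $0$-to-$r$ path into cycles, and it is a bijection onto all functional graphs with sink $0$.

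The hard part is that this vertex-level map is \emph{incompatible with condition} (3). An edge bundle (the copies of a fixed value $v$) is a block of congruent vertices that must share one endpoint, but the $r$-coding and the $0$-coding partition $V$ into \emph{different} bundles, and the reassignment on $R$ can send two $0$-coding-congruent vertices to different targets, splitting a bundle; this already fails for $\M=\{1^3,2^2\}$. The main work, therefore, is to lift the cycle lemma to the level of value bundles, so that an entire bundle is redirected as a unit. Because under any $r$-coding the copies of a value occupy a contiguous interval in the rank order, I expect the correct formulation to rotate whole coding-intervals rather than individual vertices; the crux is to prove that the resulting $f$ is always $0$-coding-consistent (hence lands in $\UG_\M$) and that the construction is invertible. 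As a fallback that sidesteps the explicit rotation, I would instead prove the profile identity $\#\{T\in\UT_\M:\,|q_T^{-1}(i)|=d_i\ \text{for all }i\}=\binom{n}{d_0,\dots,d_{M-n}}$ for every in-degree profile $(d_i)$ with $\sum_i d_i=n$; since the right side counts functions with those fiber sizes, this yields a fiber-size-preserving bijection directly.

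Finally, the statistic identity should be built into the bundle-level construction. The lifted cycle lemma only permutes the targets of the bundles lying on the path, in such a way that each vertex receives the same number of bundles as before; equivalently the per-vertex in-degree, counted with distinct labels, is preserved. Since that number is exactly $|q^{-1}(i)|$, we obtain $|q_T^{-1}(i)|=|q_G^{-1}(i)|$ for every $i$, which says sets (1) and (2) are equinumerous. Combined with the identity of sets (2) and (3) from $\psi_2$, this gives the asserted three-way equinumerosity, and together with the two bijections it completes the proof.
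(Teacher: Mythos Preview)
Your handling of $\psi_2$ is correct and matches the paper's: encoding $G$ by the value--parent function $q_G\colon[n]\to[M-n]_0$ is exactly what the paper does after collapsing congruent vertices, so sets (2) and (3) coincide for free. The gap is $\psi_1$. You correctly diagnose that a naive Joyal rearrangement along the path $0\to\cdots\to r$ can split a $0$-coding bundle, but you never actually construct a map that avoids this. ``Rotate whole coding-intervals'' is stated as an expectation, not carried out, and you do not verify $0$-coding consistency or invertibility for any concrete rule. The fallback profile identity is true, but it is not easier: proving it amounts to producing, for each profile, a bijection between trees and functions with that profile, which is the very problem you are trying to sidestep.

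The paper closes this gap by decoupling the two difficulties. Step~1 converts the $r$-coded tree into an intermediate graph $\tilde G$ that is already $0$-coding consistent: one picks an \emph{anchor} (the least-labeled vertex) in each $r$-coding congruence class, keeps the anchor's parent, and reassigns every other integer vertex to the parent of the anchor of its $0$-coding class. The price is that the vertices $0$-congruent to $r$, which form a contiguous interval $[\ell,\ell+q]$, all acquire outdegree $0$ in $\tilde G$. Step~2 then applies Foata's first fundamental transformation to the path from $0$ up to the unique $t\in[\ell,\ell+q]$ having $0$ as a descendant, opening that path into cycles via its right-to-left minima; a final congruence fix restores $0$-coding consistency and yields $G\in\UG_\M$. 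Both steps are shown invertible separately, and each preserves, at every integer vertex, the number of distinct edge labels ending there, which is precisely the equinumerosity of (1) and (2). What your proposal lacks is the analogue of Step~1: without first resolving the $r$-coding/$0$-coding mismatch, no path-level cycle lemma can be made to land in $\UG_\M$.
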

\begin{proof}
We start with the easier map $\psi_2:\UG_{\M}\rightarrow \UB_{n,k}$. For any function, say $f:A\rightarrow B$, with $A$ and $B$ being finite sets, there are two natural ways of representing $f$, other than listing out $f(i)$ for each $i\in A$. The first way is to draw the graph of $f$, say $G_f$, which is a directed graph with vertex set $A\cup f(A)$ and directed edges $i\rightarrow f(i)$. The second way is to write out all the preimages $f^{-1}(j)$ for each $j\in B$, as a set partition, say $\Pi_f$, of $A$ into $|B|$ blocks. For our purpose, the function playing this pivotal role is the parent function $$p=p_G:[M-n]\cup S_{\M}\rightarrow [M-n]_0$$ associated with any given regular graph $G$, which sends every vertex labeled either as integers from $[M-n]$ or as singletons from $S_{\M}$, to its uniquely found parent (since $G$ is regular, every nonzero vertex has outdegree $1$) whose label is from $[M-n]_0$. Now we can define the map $\psi_2$ as the composition of the following three maps. Take any $G\in\UG_{\M}$, we have
$$G\rightarrow p\rightarrow \Pi_p\rightarrow \Pi:=\psi_2(G),$$
where the first map sends $G$ to its associated parent function $p$, and the second map represents $p$ as a set partition $\Pi_p$, the third map then uses the $0$-coding of $\M$ to rewrite the integers contained in each block of $\Pi_p$ as their preimages under $c$ (repeated edge labels written only once), giving us a unique partition $\Pi$ of $[n]$ into $k$ blocks. An example of the map $\psi_2$ showing all three intermediate maps can be found in Fig.~\ref{fig:psi2}, where empty blocks are denoted by $\epsilon$ and blocks are separated by $\slash$. Since each step is invertible, $\psi_2$ is indeed a bijection. The equinumerousity between sets (2) and (3) should be clear from the construction of $\psi_2$.

\begin{figure}[htb]
\begin{tikzpicture}[scale=0.45, decoration={markings, mark= at position 0.55 with {\arrow{stealth}}}
]
%% G
\draw[postaction={decorate}] (0,0) -- (1.5,2);
\draw[postaction={decorate}] (3,0) -- (1.5,2);
\draw[postaction={decorate}] (5,0) -- (5,2);
\draw (4.9,2) edge[postaction={decorate},out=140,in=40,distance=20mm] (5.1,2);

\node at (0,0) {$\bullet$};
\node at (1.5,2) {$\bullet$};
\node at (3,0) {$\bullet$};
\node at (5,0) {$\bullet$};
\node at (5,2) {$\bullet$};

\node at (0,-.7) {$1$};
\node at (3,-.7) {$2$};
\node at (1.5,2.7) {$0$};
\node at (5,-.7) {$s_2$};
\node at (5.5,1.9) {$3$};

%% p
\draw (7.5,1) -- (13,1);
\draw (9,2) -- (9,0);
\node at (8,1.5) {$i$};
\node at (8,.5) {$p(i)$};
\node at (9.5,1.5) {$1$};
\node at (9.5,.5) {$0$};
\node at (10.5,1.5) {$2$};
\node at (10.5,.5) {$0$};
\node at (11.5,1.5) {$3$};
\node at (11.5,.5) {$3$};
\node at (12.5,1.5) {$s_2$};
\node at (12.5,.5) {$3$};

%% Pi_p
\node at (19,1) {$1,2~\slash~\epsilon~\slash~\epsilon~\slash~s_2,3$};

%% Pi
\node at (26.5,1) {$1~\slash~\epsilon~\slash~\epsilon~\slash~2,3$};

%% others
\node at (-1.5,1) {$G=$};
\node at (6.5,1) {$\rightarrow$};
\node at (14.5,1) {$\rightarrow$};
\node at (23,1) {$\rightarrow$};
\node at (31,1) {$=\psi_2(G)$};
\end{tikzpicture}
\caption{The transformation from $G$ to $\psi_2(G)$ with the given multiset $\M=\{1^3,2,3^2\}$}
\label{fig:psi2}
\end{figure}
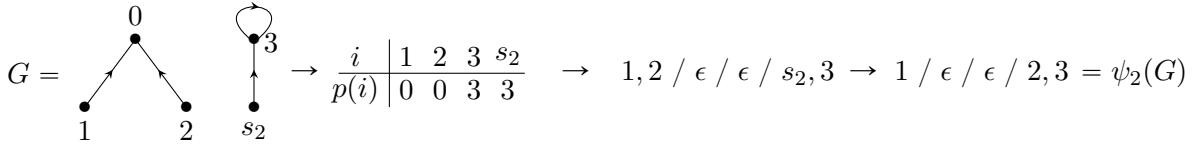

Next, we proceed to construct $\psi_1$. Given a tree $T\in\UT_{\M}$, if its root is labeled $0$, then it is already regular, in which case the map $\psi_1$ is the identity map, i.e., $\psi_1(T):=T$. Otherwise, suppose the root of $T$ is labeled $r$, for certain $1\le r\le M-n$. We transform $T$ to a regular graph $G$, whose features in contrast with $T$ are summarized in the following table. For each singleton vertex, say $s_i\in S_{\M}$, it is fixed throughout the whole construction of $G$, meaning that the parent of $s_i$ in $T$ remains the parent of $s_i$ in $G$. For a non-singleton vertex, its parents in $T$ and in $G$ may or may not be the same. The details are contained in the following two main steps. 
% Suppose $r=c(k_j)$, i.e., the root label $r$ corresponds to the $j$-th copy of edge label $k$ via the standard coding of $\M$.
\begin{table}[htb]
\centering
\begin{tabular}{cVc|c|c}
 & consistent with & outdegree of vertex $0$ & outdegree of vertex $r$\\
\Xhline{2pt}
$T$ & $r$-coding & $1$ & $0$\\
$G$ & $0$-coding & $0$ & $1$
\end{tabular}
\vspace{3mm}
\caption{The comparison between $T$ and $\psi_1(T)=G$}
\label{T and G}
\end{table}

\begin{itemize}
	\item[Step 1)] In this step, we construct an intermediate graph $\tilde{G}$. The idea is to choose a unique representative, called the {\em anchor}, from each congruence class (see Definition~\ref{def:congruent}) of vertices in $T$ with respect to $r$-coding. Going from $T$ to $\tilde{G}$, the anchor vertices are all fixed, while other vertices may have to change their parents. More precisely, let
	$$A_T:=\{i\in[M-n]_0: \text{if $j$ is congruent to $i$ with respect to $r$-coding, then $i\le j$}\}\cup S_{\M}$$
	be the set of anchor vertices of $T$. Note that in particular, the root vertex $r$ is always an anchor (since there exists no other vertices that are congruent to $r$), so is the vertex $0$ (since it is the smallest label in value). Moreover, each congruence class with respect to $r$-coding contains exactly one anchor, and there are $n+1$ anchors in total. Now let $\tilde{G}$ be the unique graph with the same vertex set as $T$, such that the following conditions are satisfied.
	\begin{enumerate}[(i)]
	\item $p_{\tilde{G}}(s_i)=p_T(s_i)$ for each singleton $s_i\in S_{\M}$, and $p_{\tilde{G}}(0)=p_T(0)$.
	\item The vertices of $\tilde{G}$ having outdegree $0$ are precisely those vertices congruent to $r$ (including $r$ itself) with respect to $0$-coding. Note that they must be consecutively labeled, say as $[\ell,\ell+q]:=\{\ell,\ell+1,\ldots,\ell+q\}$.
	\item For the remaining integer-labeled vertex $i\in[M-n]\setminus [\ell,\ell+q]$, we have
	\begin{align*} %\label{eq:find anchor}
	p_{\tilde{G}}(i)=p_T(j),
	\end{align*}
	where $j$ is the unique integer such that $j\in A_T$ and $j$ is congruent to $i$ with respect to $0$-coding. 
	\end{enumerate}
	Conversely, to go from $\tilde{G}$ back to $T$, we first realize that $r=r(T)$ must be a vertex from $[\ell,\ell+q]$, i.e., those vertices in $\tilde{G}$ having outdegree $0$. Knowing this is enough to determine the congruence classes with respect to $r$-coding for all vertices in $[M-n]\setminus [\ell,\ell+q]$. Next, use the largest labeled vertices in each congruence class (except $[\ell,\ell+q]$) with respect to $0$-coding as the anchor vertices, keep their parents unchanged while adjusting the parents of non-anchor vertices so that vertices within the same congruence class ($r$-coding) become siblings. Now observe that among all vertices in $[\ell,\ell+q]$, the one that should be the root $r$ of $T$ is exactly the one currently having $\ell-1$ as its descendant. The final step is to make each vertex from $[\ell,\ell+q]\setminus\{r\}$ a sibling vertex of $\ell-1$, yielding our desired tree $T$. Hence we see that step 1) is indeed invertible.
	\item[Step 2)] In this second step, we make some further adjustments to $\tilde{G}$ and derive $G:=\psi_1(T)$. Recall that if $r=c(a_j)$ is the root of $T$, then there are $m_a-1$ vertices in $\tilde{G}$ having outdegree $0$, as a result of condition (ii) from Step 1). Suppose $t$ is the one that has $0$ as its descendant, and let $0=v_0\rightarrow v_1\rightarrow\cdots\rightarrow v_s=t$ be the path from $0$ to $t$ in $\tilde{G}$. Now we 
	\begin{enumerate}[i)]
	\item find and relabel the right-to-left minima of the word $v_0v_1\cdots v_s$ as $$u_0=v_0=0<u_1<\cdots<u_j=v_s=t;$$
	\item delete the edge $u_i\rightarrow p(u_i)$ for each $0\le i<j$;
	\item add the edge $u_i\rightarrow p(u_{i-1})$ for each $0<i\le j$. 
	\end{enumerate}
	For example, the path $0\rightarrow 3\rightarrow 1\rightarrow 5\rightarrow 11\rightarrow 12\rightarrow 8$ becomes $0$, the cycle $3\rightarrow 1\rightarrow 3$, the loop $5\rightarrow 5$, and the cycle $11\rightarrow 12\rightarrow 8\rightarrow 11$. This 3-step operation probably remind the reader of Foata's first fundamental transformation \cite[Chap.~10.2]{Lot}. Finally, note that each vertex $v_0,v_1,\ldots,v_s$ along the original path is contained in a different congruence class with respect to $0$-coding, so there is a unique way to adjust accordingly the parents of those vertices congruent to certain $u_i$, so as to produce a regular graph that we denote as $G$. Just like Foata's first fundamental transformation is a bijection, it should be clear how to reverse this step 2) and uniquely recover $\tilde{G}$ from any given regular graph $G$.
\end{itemize}

In conclusion, the map $\psi_1:\UT_{\M}\rightarrow\UG_{\M}$ consisting of the two steps 1) and 2) above is a bijection that ensures the equinumerousity between the sets (1) and (2). The proof is now completed.
\end{proof}

\begin{example}
Let $T$ be a tree in $\UT_{\M}$ with $\M=\{1^2,2^3,3^2,4,5^3,6^2,7^4,8\}$ as shown in Fig.~\ref{fig:unordered VE-tree}, the $0$-coding and $5$-coding of the multiset $\M$ can be found in Table~\ref{$0$-coding and $5$-coding}, and the set of anchor vertices corresponding to the tree $T$ is given by  $$A_T=\{0,1,3,4,5,7,8,s_4,s_8\}.$$
By applying the map $\psi_1$ and $\psi_2$, we get a set partition $\Pi\in\UB_{8,11}$ as the final output in Fig.~\ref{fig:unordered VE-tree}.
\end{example}

\begin{table}[h]
\centering
\begin{tabular}{cVc|c|c|c|c|c|c|c|c|c|c|c|c|c|c|c|c|c}
$i$ & $1_1$ & $1_2$ & $2_1$ & $2_2$ & $2_3$ & $3_1$ & $3_2$ & $4$ & $5_1$ & $5_2$ & $5_3$ & $6_1$ & $6_2$ & $7_1$ & $7_2$ & $7_3$ & $7_4$ & $8$\\
\Xhline{2pt}
$c(i)$ & & $1$ & & $2$ & $3$ & & $4$ & $s_4$ & & $5$ & $6$ & & $7$ & & $8$ & $9$ & $10$ & $s_8$\\
\Xhline{0.5pt}
$c_5(i)$ & & $0$ & & $1$ & $2$ & & $3$ & $s_4$ & & $4$ & $6$ & & $7$ & & $8$ & $9$ & $10$ & $s_8$
\end{tabular}
\vspace{3mm}
\caption{The $0$-coding and $5$-coding of the multiset $\M=\{1^2,2^3,3^2,4,5^3,6^2,7^4,8\}$.}
\label{$0$-coding and $5$-coding}
\end{table}

\begin{figure}[htb]
\begin{tikzpicture}[scale=0.3, decoration={markings, mark= at position 0.5 with {\arrow{stealth}}}]

%% T
\draw[postaction={decorate}] (-7,10) -- (-6,12);
\draw[postaction={decorate}] (-5,10) -- (-6,12);
\draw[postaction={decorate}] (-6,12) -- (1,16);
\draw[postaction={decorate}] (-3,8) -- (-3,10);
\draw[postaction={decorate}] (-3,10) -- (-1,12);
\draw[postaction={decorate}] (0,8) -- (1,10);
\draw[postaction={decorate}] (2,8) -- (1,10);
\draw[postaction={decorate}] (1,10) -- (-1,12);
\draw[postaction={decorate}] (-1,12) -- (1,16);
\draw[postaction={decorate}] (3,12) -- (1,16);
\draw[postaction={decorate}] (8,10) -- (8,12);
\draw[postaction={decorate}] (8,12) -- (1,16);

\node at (-7,10) {$\bullet$};
\node at (-5,10) {$\bullet$};
\node at (-6,12) {$\bullet$};
\node at (-3,8) {$\bullet$};
\node at (0,8) {$\bullet$};
\node at (2,8) {$\bullet$};
\node at (-3,10) {$\bullet$};
\node at (1,10) {$\bullet$};
\node at (-1,12) {$\bullet$};
\node at (3,12) {$\bullet$};
\node at (8,10) {$\bullet$};
\node at (8,12) {$\bullet$};
\node at (1,16) {$\bullet$};

\node at (-7.5,9.3) {$1$};
\node at (-5,9.3) {$2$};
\node at (-6.5,12) {$7$};
\node at (1,17) {$5$};
\node at (-1.8,12) {$8$};
\node at (-3.5,10) {$4$};
\node at (-3.8,8) {$s_8$};
\node at (-0.7,8) {$3$};
\node at (2.8,8) {$s_4$};
\node at (1.7,10) {$6$};
\node at (3.7,12) {$9$};
\node at (8.7,12) {$10$};
\node at (8.7,10) {$0$};

\node at (-7.2,11) {$2$};
\node at (-5,11) {$2$};
\node at (-3,14.5) {$6$};
\node at (-0.3,14.5) {$7$};
\node at (-2.7,11) {$5$};
\node at (-2.5,9) {$8$};
\node at (0,9) {$3$};
\node at (2.1,9) {$4$};
\node at (0.7,11) {$5$};
\node at (2.1,14.5) {$7$};
\node at (4.7,14.5) {$7$};
\node at (7.4,11) {$1$};

\node at (1,6) {$T$};
\node at (14,12) {$\rightarrow$};

%% \tildeG
\draw[postaction={decorate}] (19,10) -- (19,12);
\draw[postaction={decorate}] (19,12) -- (22,16);
\draw[postaction={decorate}] (21,8) -- (21,10);
\draw[postaction={decorate}] (21,10) -- (21,12);
\draw[postaction={decorate}] (21,12) -- (22,16);
\draw[postaction={decorate}] (23,12) -- (22,16);
\draw[postaction={decorate}] (25,10) -- (25,12);
\draw[postaction={decorate}] (25,12) -- (22,16);

\node at (19,10) {$\bullet$};
\node at (19,12) {$\bullet$};
\node at (22,16) {$\bullet$};
\node at (21,8) {$\bullet$};
\node at (21,10) {$\bullet$};
\node at (21,12) {$\bullet$};
\node at (23,12) {$\bullet$};
\node at (25,10) {$\bullet$};
\node at (25,12) {$\bullet$};

\node at (18.4,10) {$1$};
\node at (18.4,12) {$7$};
\node at (22,17) {$5$};
\node at (20.2,8) {$s_8$};
\node at (20.4,10) {$4$};
\node at (20.4,12) {$8$};
\node at (23.6,12) {$9$};
\node at (25.6,10) {$0$};
\node at (25.8,12) {$10$};

\draw[postaction={decorate}] (28,12) -- (30,16);
\draw[postaction={decorate}] (30,12) -- (30,16);
\draw[postaction={decorate}] (32,12) -- (30,16);

\node at (28,12) {$\bullet$};
\node at (30,12) {$\bullet$};
\node at (32,12) {$\bullet$};
\node at (30,16) {$\bullet$};

\node at (27.4,12) {$2$};
\node at (30.6,12) {$3$};
\node at (32.8,12) {$s_4$};
\node at (30,17) {$6$};

\node at (26,6) {$\tilde{G}$};

\node at (26,4) {$\downarrow$};

%% G
\draw[postaction={decorate}] (19,-5) -- (19,-3);
\draw[postaction={decorate}] (19,-3) -- (22,1);
\draw[postaction={decorate}] (21,-7) -- (21,-5);
\draw[postaction={decorate}] (21,-5) -- (21,-3);
\draw[postaction={decorate}] (21,-3) -- (22,1);
\draw[postaction={decorate}] (23,-3) -- (22,1);
\draw[postaction={decorate}] (24,-7) -- (26,-5);
\draw[postaction={decorate}] (26,-7) -- (26,-5);
\draw[postaction={decorate}] (28,-7) -- (26,-5);
\draw[postaction={decorate}] (26,-5) -- (26,-3);
\draw[postaction={decorate}] (26,-3) -- (22,1);
\draw (22.1,0.7) edge[postaction={decorate},out=130,in=80,distance=20mm] (26,-3);

\node at (19,-5) {$\bullet$};
\node at (19,-3) {$\bullet$};
\node at (22,1) {$\bullet$};
\node at (21,-7) {$\bullet$};
\node at (21,-5) {$\bullet$};
\node at (21,-3) {$\bullet$};
\node at (23,-3) {$\bullet$};
\node at (24,-7) {$\bullet$};
\node at (26,-7) {$\bullet$};
\node at (28,-7) {$\bullet$};
\node at (26,-5) {$\bullet$};
\node at (26,-3) {$\bullet$};

\node at (18.4,-5) {$1$};
\node at (18.4,-3){$7$};
\node at (22,2) {$5$};
\node at (20.2,-7) {$s_8$};
\node at (20.4,-5) {$4$};
\node at (20.4,-3) {$8$};
\node at (23.6,-3) {$9$};
\node at (23.4,-7) {$2$};
\node at (26.7,-7) {$3$};
\node at (28.8,-7) {$s_4$};
\node at (26.7,-5) {$6$};
\node at (26.7,-3) {$10$};

\node at (30,1) {$\bullet$};
\node at (30,2) {$0$};

\node at (26,-9) {$G=\psi_1(T)$};

\node at (15,-3) {$\leftarrow$};

%% Pi_p
\node at (1,-3) {$\Pi_p=\epsilon\slash\epsilon\slash\epsilon\slash\epsilon\slash s_8\slash7,8,9,10\slash2,3,s_4\slash1\slash4\slash\epsilon\slash5,6$};

\node at (1,-8) {$\downarrow$};

%% Pi
\node at (1,-12) {$\Pi=\psi_2(G)=\epsilon\slash\epsilon\slash\epsilon\slash\epsilon\slash8\slash6,7\slash2,4\slash1\slash3\slash\epsilon\slash5$};
\end{tikzpicture}
\caption{An example of the bijection $\psi_1$ and $\psi_2$}
%with $\M=\{1^2,2^3,3^2,4,5^3,6^2,7^4,8\}$}
\label{fig:unordered VE-tree}
\end{figure}
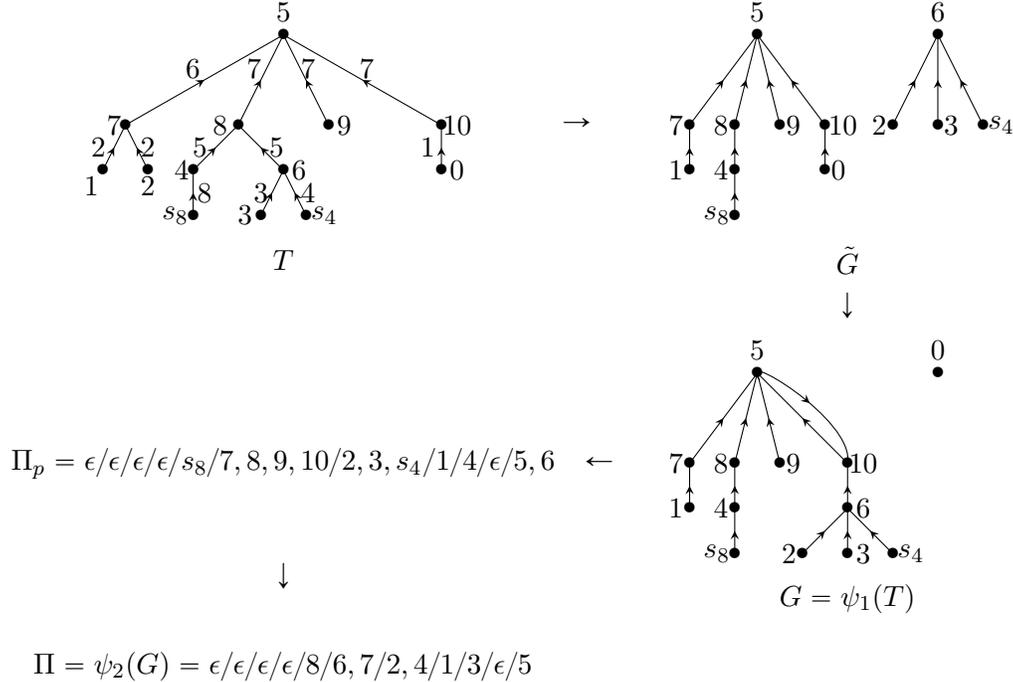

The composition of the two bijections $\psi_1$ and $\psi_2$ constructed in Theorem~\ref{thm:unorder} is a bijection from $\UT_{\M}$ to $\UB_{n,k}$, such that for $T\in\UT_{\M}$ and each $i\in[M-n]_0$, there are as many edges with distinct labels ending at vertex $i$ in $T$ as integers contained in the block $\pi^{(i)}$ of $\Pi:=\psi_2(\psi_1(T))$. Consequently, permuting the edges ending at $i$ corresponds uniquely to permuting the integers contained in the block $\pi^{(i)}$. In other words, we can lift the composition $\psi_2\circ\psi_1$ to a bijection $\Psi:\T_{\M}\rightarrow \B_{n,k}$. Namely, for an ordered tree $T\in\T_{\M}$, we ``forget'' the relative orders between sibling edges to obtain the unique unordered tree, say $\overline{T}\in\UT_{\M}$, map it to the partition $\overline{\Pi}:=\psi_2(\psi_1(\overline{T}))\in\UB_{n,k}$, then permute the integers inside each block $\pi^{(i)}$ of $\overline{\Pi}$ so that the word obtained is order isomorphic to the word consisted of the distinct labels of the edges ending at vertex $i$ of $T$. This ordered partition is the image $\Pi=\Psi(T)\in\B_{n,k}$. The mapping $\Psi$ defined this way is clearly a bijection and will be used to prove Theorem~\ref{thm:FL}. We still need to explain why the triple statistics $(\cdes,\casc,\lleaf)$ are transformed to $(\des,\asc,\emp)$. To this end, we first show the following lemma. 

\begin{lemma}
For $n\ge 1$, let $A_n^{(c)}(x,y):=\sum_{\pi\in\s_n}x^{\cdes(\pi)}y^{\casc(\pi)}$ be the bivariate cyclic Eulerian polynomial, then we have:
\begin{align}\label{eq:cdes=ndes}
A_n^{(c)}(x,y)=nA_{n-1}(x,y).
\end{align}
\end{lemma}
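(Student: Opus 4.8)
The plan is to exploit that $\cdes$ and $\casc$ depend only on the \emph{cyclic} word, so they are invariant under rotation of the one-line notation, and this rotation invariance is precisely what produces the factor $n$. First I would record the consistency check: for $\pi\in\s_n$ the $n$ cyclic adjacencies are each either a rise or a fall (the entries are distinct), so $\cdes(\pi)+\casc(\pi)=n$; and for $\sigma\in\s_{n-1}$ the convention $\sigma_0=\sigma_n=0$ means the padded word $0\,\sigma_1\cdots\sigma_{n-1}\,0$ has $n$ adjacencies, whence $\des(\sigma)+\asc(\sigma)=n$ as well. Thus both sides of \eqref{eq:cdes=ndes} are homogeneous of degree $n$, which makes the identity at least dimensionally plausible.

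Next I would partition $\s_n$ into cyclic-rotation classes. Since every $\pi$ has distinct entries, no nontrivial rotation fixes it, so each class has exactly $n$ elements and there are $(n-1)!$ classes. Each class contains a unique representative whose first letter is the maximal value $n$; writing it as $n\,\sigma_1\sigma_2\cdots\sigma_{n-1}$, the truncation $\sigma=\sigma_1\cdots\sigma_{n-1}$ is a genuine element of $\s_{n-1}$, and the assignment from representatives to $\sigma$ is a bijection onto $\s_{n-1}$. Because $\cdes$ and $\casc$ are rotation invariant, all $n$ words in a class share the pair $(\cdes,\casc)$ of their representative, which is where the coefficient $n$ will come from.

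The crux is then to verify $\cdes(n\sigma)=\des(\sigma)$ and $\casc(n\sigma)=\asc(\sigma)$. I would read off the $n$ cyclic adjacencies of $n\,\sigma_1\cdots\sigma_{n-1}$: the pair $(n,\sigma_1)$ is forced to be a descent, the wrap-around pair $(\sigma_{n-1},n)$ is forced to be an ascent, and the remaining pairs $(\sigma_i,\sigma_{i+1})$, $1\le i\le n-2$, are exactly the interior adjacencies of $\sigma$. On the linear side, the convention $\sigma_0=\sigma_n=0$ forces the head pair $(0,\sigma_1)$ to be an ascent and the tail pair $(\sigma_{n-1},0)$ to be a descent, with the same interior adjacencies in between. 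So the forced cyclic descent at $(n,\sigma_1)$ is matched by the forced linear tail descent, the forced cyclic ascent at $(\sigma_{n-1},n)$ is matched by the forced linear head ascent, and the interior contributions agree termwise. Summing over the $(n-1)!$ classes then gives
\begin{align*}
A_n^{(c)}(x,y)=\sum_{\pi\in\s_n}x^{\cdes(\pi)}y^{\casc(\pi)}=n\sum_{\sigma\in\s_{n-1}}x^{\des(\sigma)}y^{\asc(\sigma)}=nA_{n-1}(x,y).
\end{align*}

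The only delicate point, and the place where I expect the bookkeeping to need care, is this final matching of the two boundary conventions: the cyclic statistic closes the word by identifying $\pi_{n+1}$ with $\pi_1$, whereas the linear statistic pads with virtual zeros at both ends. One must check that inserting the maximal letter $n$ at the front reproduces exactly the two forced boundary adjacencies (one rise and one fall) of the padded word $0\,\sigma_1\cdots\sigma_{n-1}\,0$; once that is confirmed, everything else is routine.
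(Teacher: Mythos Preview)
Your proof is correct. The paper, however, takes a different and much shorter route: it observes that $\cdes(\pi)+\casc(\pi)=n$ for $\pi\in\s_n$ and $\des(\sigma)+\asc(\sigma)=n$ for $\sigma\in\s_{n-1}$, so both sides of \eqref{eq:cdes=ndes} are homogeneous of degree $n$ in $(x,y)$, and it therefore suffices to verify the identity after setting $y=1$; that univariate statement is then dispatched by citing Fulman and Petersen rather than proved.

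What you do differently is give a self-contained bijective argument: partition $\s_n$ into rotation classes of size $n$, pick in each class the representative $n\,\sigma_1\cdots\sigma_{n-1}$, and check directly that the two forced cyclic adjacencies $(n,\sigma_1)$ and $(\sigma_{n-1},n)$ match the two forced boundary adjacencies of the padded word $0\,\sigma_1\cdots\sigma_{n-1}\,0$. This buys you a proof that stands on its own without external references and makes the factor $n$ combinatorially transparent; the paper's approach buys brevity at the cost of deferring the content to the literature. (Your argument is in fact close in spirit to one of the proofs being cited, so you have essentially reconstructed what the paper outsources.) One tiny remark: at $n=1$ the homogeneity degree is $0$ rather than $1$ on both sides, but your rotation-class argument still goes through trivially there.
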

\begin{proof}
Note that for a fixed permutation $\pi\in\s_n$, we always have $\cdes(\pi)+\casc(\pi)=n$ and $\des(\pi)+\asc(\pi)=n+1$. Thus it suffices to show the identity after we set $y=1$ in \eqref{eq:cdes=ndes}. This univariate version is already known, see \cite[Coro.~1]{Ful} and \cite[Prop.~1.1]{Pet} for two proofs. 
\end{proof}

\begin{proof}[Proof of Theorem~\ref{thm:FL}]
We have already defined the bijection $\Psi$, which indeed maps the equivalence class $[T]$ to $[\Psi(T)]$. Moreover, note that $\lleaf(T)=\emp(\Psi(T))$, and both statistics $\lleaf$ and $\emp$ are constant on an equivalence class. 
Let $\sigma=(1~2\cdots n)$ be the $n$-cycle in $\s_n$, we see that for a given $\pi\in\s_n$, all permutations $\pi\sigma^i,i=0,1,\ldots,n-1$, have the same number of cyclic descents and cyclic ascents. Therefore, if we let $\pi$ run over all permutations in $\s_n$ with a predetermined first letter $\pi_1$, the generating function of the pair $(\cdes,\casc)$ is given by $A_n^{(c)}(x,y)/n$. Relying on this oberservation as well as \eqref{eq:cdes=ndes}, we have
\begin{align*}
\sum_{T'\in[T]}x^{\cdes(T')}y^{\casc(T')} =A_{n_r}(x,y) \prod_{n_i> 0,\:i\neq r}\frac{A_{n_i+1}^{(c)}(x,y)}{n_i+1} = \prod_{n_i>0}A_{n_i}(x,y) = \sum_{\Pi'\in[\Psi(T)]}x^{\des(\Pi')}y^{\asc(\Pi')},
\end{align*}
where $r$ is the root label of $T$, and $n_i$ is on one hand, the number of distinct labels of edges ending at vertex $i$ of $T$, and on the other hand, the number of integers contained in block $\pi^{(i)}$ of $\Psi(T)$. This proves \eqref{eq:triple stat equiv} and comletes the proof of Theorem~\ref{thm:FL}.
\end{proof}

We devote the rest of this section to the discussion on the proofs of Theorem~\ref{thm:YZ}. To deduce \eqref{numerator:genqStir} from \eqref{genqStir-Euler}, the quickest way after setting $y=z=1$ in \eqref{genqStir-Euler}, is to utilize the expression \eqref{gf:Eulerian poly} of the generating function $A(t,u)$ and follow the approach used by Elizalde \cite[Thm.~2.5]{Eli}. Alternatively, building on the combinatorial interpretation of \eqref{genqStir-Euler}, we supply here a bijective proof \`a la Gessel and Stanley \cite{GS}.

Let us first recall the notion of {\em barred permutations} (see e.g.~\cite{GS}). These are sequences of integers and bars ($/$) such that the integers form a word $w$ with distinct letters, and there is at least one bar in each descent of $w$. Now define $\overline{\B}_{n,k}$ to be the set of {\em barred partitions} of $[n]$ into $k$ blocks, where each block is written as a barred permutation. Note that each block $\pi^{(i)}=a_1a_2\cdots a_{n_i}$ provides $n_i+1$ spaces (inbetween $a_j$ and $a_{j+1}$, before $a_1$, and after $a_{n_i}$) where bars can be inserted, giving in total $n+k=n+(M-n+1)=M+1$ such spaces. To avoid confusion, we now use the curly brackets $\{\}$ to separate blocks and reserve the symbol $/$ for bars inside each block. For example, $(\{/3//14/\},\{///\},\{2///5//\})$ is a barred partition in $\overline{\B}_{5,3}$, while $(\{/3//14/\},\{///\},\{2///5\})$ is not, since in the third block there are no bars after the ending descent at $5$.

\begin{proof}[Bijective proof of Theorem~\ref{thm:YZ}]
Thanks to \eqref{genqStir-Euler} and \eqref{gf:Bnk}, we can interprete the right hand side of \eqref{numerator:genqStir} as:
\begin{align}\label{gf:bar over B}
\frac{(M-n+1)\overline{Q}_{\M}(t)}{(1-t)^{M+1}} = \frac{\sum_{\Pi\in\B_{n,k}}t^{\des(\Pi)}}{(1-t)^{M+1}} = \sum_{\Pi\in\overline{\B}_{n,k}}t^{\bar(\Pi)},
\end{align}
where $\bar(\Pi)$ is the total number of bars inserted in all blocks of $\Pi$. 

To connect with the left hand side of \eqref{numerator:genqStir}, for a fixed integer $m$ we enumerate barred partitions with $m$ bars in another way. Firstly, we determine how many bars are contained in each block. There are $k=M-n+1$ blocks and $m$ bars, so the number of different ways to insert bars into blocks is given by $\binom{M-n+m}{m}$. Once the bars are in position, it remains to place the integers $1,2,\ldots,n$. This step can be intuitively thought of as placing $n$ labeled balls (integers) into $m$ labeled boxes (bars to the immediate right), with balls in the same box aligned increasingly from left to right (since there must be at least one bar at each descent). In other words, the relative order between the balls inside each box is irrelevant. The number of ways to accomplish this second step is then given by $m^n$. Thus, the coefficient of $t^m$ in \eqref{gf:bar over B} is $\binom{M-n+m}{m}m^n$, as desired.
\end{proof}

\section{Partial gamma positivity and a proof of Theorem~\ref{thm:pargamma}}

The notion of gamma-positivity has attracted a considerable amount of interest recently, with various perspectives coming from enumerative combinatorics, enumerative geometry, as well as poset homology, see the survey by Athanasiadis \cite{Ath} and the references therein. A univariate polynomial $f(x)$ is said to be {\em $\gamma$-positive} if it has an expansion
$$f(x)=\sum_{k=0}^{\lfloor\frac{n}{2}\rfloor}\gamma_kx^k(1+x)^{n-2k}$$ with $\gamma_k\ge 0$. A bivariate polynomial $g(x,y)$ is said to be {\em homogeneous $\gamma$-positive}, if it can be expressed as
$$g(x,y)=\sum_{k=0}^{\lfloor\frac{n}{2}\rfloor}\gamma_k(xy)^k(x+y)^{n-2k}$$
with $\gamma_k\ge 0$. A well-known prototype of homogeneous $\gamma$-positive polynomial is the aforementioned bivariate Eulerian polynomial
\begin{align}\label{gamma of biEuler}
A_n(x,y)=\sum_{k=1}^{\lfloor\frac{n+1}{2}\rfloor}\gamma_{n,k}(xy)^k(x+y)^{n+1-2k}, \; n\ge 1.
\end{align}
Here the coefficient $\gamma_{n,k}$ is not only nonnegative, it has the following explicit combinatorial interpretation which was first derived by Foata and Strehl \cite{FS} via the well-known group action called ``valley-hopping''; see also \cite{SW,LZ,LMZ}. Recall that a {\em double descent} of $\pi\in\s_n$ is any index $1\le i\le n$ such that $\pi_{i-1}>\pi_i>\pi_{i+1}$ with the convention that $\pi_0=\pi_{n+1}=0$. Denoting $\ddes(\pi)$ the number of double descents of $\pi$, we have
\begin{align}\label{gamma-coef:biEuler}
\gamma_{n,k}=\#\{\pi\in\s_n:\des(\pi)=k,\ddes(\pi)=0\}.
\end{align}
It is worth mentioning that in a recent work of Sun \cite{Sun}, another kind of bivariate Eulerian polynomial was introduced and shown to enjoy similar but nonhomogeneous $\gamma$-positivity.

For trivariate polynomials, a notion that naturally extends homogeneous $\gamma$-positivity is {\em partial $\gamma$-positivity}, see \cite{SZ,LZ,MMY,LMZ} for recent work on several partial $\gamma$-positive polynomials. A trivariate polynomial $h(x,y,z)$ is called partial $\gamma$-positive if it can be expanded as $h(x,y,z)=\sum_i s_i(x,y)z^i$ with $s_i(x,y)$ being a homogeneous $\gamma$-positive polynomial for every $i$.

In our interpretation of the gamma coefficient $\gamma_{\M,i,j}$ in \eqref{pargamma-coef}, the statistics $\sdes$ and $\sddes$ are undefined. We now give their definitions. 
\begin{Def}\label{def:sdes}
Given a quasi-Stirling multipermutation $\pi=\pi_1\pi_2\cdots\pi_M\in\overline{\Q}_{\M}$, an index $i$, $1\le i\le M$, is said to be a {\em sibling descent} of $\pi$, if the following two conditions are satisfied:
\begin{enumerate}
	\item $\pi_i$ is the last copy among all entries with the same value;
	\item either $\pi_{i+1}$ is the first copy of its value and $\pi_i>\pi_{i+1}$ (type I), or $\pi_{i+1}$ is a non-first copy of its value (type II).
\end{enumerate}
Here we use again the convention $\pi_0=\pi_{M+1}=0$, so that $\pi_{M+1}$ is the second copy of $0$. An index $i$, $2\le i\le M$, is called a {\em double sibling descent} of $\pi$, if both $i-1$ and $i$ are sibling descents of $\pi$ and $i-1$ is of type I. The number of sibling descents (resp.~double sibling descents) of $\pi$ is denoted as $\sdes(\pi)$ (resp.~$\sddes(\pi)$).
\end{Def}

It is worth pointing out, that although our definitions of sibling descents and double sibling descents are a bit complicated, they do specialize to the classical statistics descents and double descents when the multiset $\M$ is taken to be $\{1,2,\ldots,n\}$. In this case, each entry appears once in the permutation so $\overline{\Q}_{\M}$ reduces to $\s_n$, and each sibling descent, except for the ending descent, is of type I, and is actually the usual descent (since condition (1) is now trivially true). I.e., for each $\pi\in\s_n$, $\sdes(\pi)=\des(\pi)$ and $\sddes(\pi)=\ddes(\pi)$. So we see \eqref{pargamma-coef} degenerates to \eqref{gamma-coef:biEuler}.

Before proving Theorem~\ref{thm:pargamma}, we make a quick observation and a remark. Combining \eqref{genqStir-Euler} with \eqref{gf:Bnk}, and noting that the product of two or more homogeneous $\gamma$-positive polynomials is still homogeneous $\gamma$-positive, we see immediately that the original conjecture of Lin, Ma and Zhang on the partial $\gamma$-positivity of $\overline{Q}_{\M}(x,y,z)$ holds true. It is the combinatorial meaning of the $\gamma$-coefficients that needs more effort to uncover. Recall that Yan, Huang and Yang \cite{YHY} also confirmed this conjecture and provided a combinatorial interpretation for the $\gamma$-coefficients. Their interpretation is in terms of statistics defined on certain vertex labeled trees, not directly on quasi-Stirling multipermutations. More precisely, the set of ordered (vertex) labeled trees used in \cite{YHY} is in simple bijection with the set of regular VE-labeled trees $\T_{\M}^0$ defined in this paper. Now let $\dcdes(T)$ be the number of double cyclic descents of the tree $T\in\T_{\M}^0$ (see \cite{YHY} for its definition), then the interpretation found by Yan et al. \cite[Thm.~3.2]{YHY} can be rephrased as
\begin{align}\label{pargamma:Yan}
\gamma_{\M,i,j}=\#\{T\in\T_{\M}^0:\lleaf(T)=i,\cdes(T)=j,\dcdes(T)=0\}.
\end{align}

Comparing \eqref{pargamma:Yan} with \eqref{pargamma-coef}, we get
\begin{corollary}
For any multiset $\M=\{1^{m_1},\ldots,n^{m_n}\}$ with $M=m_1+\cdots+m_n$, and $0\le i\le M-n$, $1\le j\le \lfloor(M+1-i)/2\rfloor$, the two sets
$$\{\pi\in\bQ_{\M}:\plat(\pi)=i,\sdes(\pi)=j,\sddes(\pi)=0\}$$
and
$$\{T\in\T_{\M}^0:\lleaf(T)=i,\cdes(T)=j,\dcdes(T)=0\}$$
are equinumerous.
\end{corollary}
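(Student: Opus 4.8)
The plan is to establish a bijection between the two sets
$$\{\pi\in\bQ_{\M}:\plat(\pi)=i,\sdes(\pi)=j,\sddes(\pi)=0\}
\quad\text{and}\quad
\{T\in\T_{\M}^0:\lleaf(T)=i,\cdes(T)=j,\dcdes(T)=0\},$$
and the natural candidate is the restriction of the bijection $\phi$ from Theorem~\ref{thm:bijTtoR}. Recall that $\phi$ already restricts to a bijection between $\T_{\M}^0$ and $\bQ_{\M}$, and that by \eqref{lleaf=plat} it sends $\lleaf(T)$ to $\plat(\pi)$. So the statistic $i$ is matched automatically, and the regularity condition (root labeled $0$) on the tree side corresponds exactly to the unrooted quasi-Stirling multipermutations on the permutation side. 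The entire content of the corollary then reduces to showing that $\phi$ identifies the remaining two statistics, namely that $\cdes(T)=\sdes(\pi)$ and that the double-descent conditions match, i.e. $\dcdes(T)=0$ if and only if $\sddes(\pi)=0$.

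First I would verify that $\cdes(T)=\sdes(\pi)$ for $T\in\T_{\M}^0$ with $\pi=\phi(T)$. This is the heart of the matter and should be carried out by the same case analysis used to prove \eqref{cdes=des} in Theorem~\ref{thm:bijTtoR}, but now tracking the refined statistic $\sdes$ rather than $\des$. The key observation is that a cyclic descent at a vertex $v$ of $T$ records a comparison between two adjacent edge-labels in the cyclic word $b\,a_1\cdots a_d$ around $v$, and under the preorder traversal each such comparison is realized at a precise location in $\pi$ where one subtree has just been fully traversed and the next edge is about to be read. The two clauses of Definition~\ref{def:sdes} are engineered to capture exactly these transitions: condition~(1), that $\pi_i$ be the last copy of its value, corresponds to having finished traversing all edges in a sibling group, while the type~I/type~II dichotomy in condition~(2) distinguishes whether the next edge descends to a genuinely smaller label or merely moves to another non-first copy (the cyclic wrap-around to the parent edge $b$). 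I would match descents at the root (handled by the plain $\des$ of $a_1\cdots a_d$) against the sibling descents involving the $0$ convention, using the stated convention that $\pi_{M+1}$ is the second copy of $0$.

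Once the equality of $\cdes$ and $\sdes$ is in place, the matching of the double-descent conditions should follow by localizing the previous argument to consecutive positions. A double cyclic descent of $T$ is, by definition in \cite{YHY}, a pair of consecutive cyclic descents of a specified type, and under $\phi$ these consecutive tree-descents map to consecutive positions $i-1,i$ of $\pi$; the requirement in Definition~\ref{def:sdes} that both $i-1$ and $i$ be sibling descents with $i-1$ of type~I is precisely the image of the tree-side ``double'' condition. I would therefore argue that $\sddes(\pi)$ counts exactly the positions that $\phi$ identifies with the double cyclic descents of $T$, so that $\sddes(\pi)=\dcdes(T)$; in particular one side vanishes iff the other does. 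Alternatively, and perhaps more safely, one can bypass recomputing $\dcdes$ from its definition in \cite{YHY} and instead prove $\sddes(\pi)=\dcdes(\phi^{-1}(\pi))$ directly, which is all that the corollary requires.

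The main obstacle I expect is the bookkeeping in the first step: carefully pinning down, for each of the several descent types in the proof of \eqref{cdes=des}, which copy (first, non-first, or last) of its value each of $\pi_i$ and $\pi_{i+1}$ is, and confirming that the two-part definition of sibling descent selects exactly the cyclic descents and no others. The subtle cases are the boundary interactions with the $0$ convention and the treatment of integer-labeled versus singleton-labeled leaves, since only the former preserve a plateau (case iii in Theorem~\ref{thm:bijTtoR}) and the definition of $\sdes$ refers to "last copy" of a value, which behaves differently for singletons. Verifying that these edge cases are consistent—rather than the conceptual structure of the bijection, which is already supplied by $\phi$—is where the real work lies.
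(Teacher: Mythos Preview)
Your approach has a genuine gap: the bijection $\phi$ does \emph{not} send $\cdes$ to $\sdes$. What Theorem~\ref{thm:bijTtoR} establishes is $\cdes(T)=\des(\pi)$, and in general $\des(\pi)\neq\sdes(\pi)$. The example in Fig.~\ref{fig:VE-tree} already witnesses this: for $T\in\T_{\M}^0$ with $\phi(T)=\pi=78212867447993355397$ over $\M=\{1,2^2,3^3,4^2,5^2,6,7^4,8^2,9^3\}$ one has $\cdes(T)=\des(\pi)=8$, whereas a direct check of Definition~\ref{def:sdes} gives $\sdes(\pi)=9$ (the indices $4,5,6,7,10,17,18,19,20$ are all sibling descents). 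So your proposed identity $\cdes(T)=\sdes(\pi)$ is false, and the case analysis you outline cannot succeed. Indeed the paper states explicitly, right after this corollary, that $\phi$ does not restrict to the subsets refined by these statistics and that finding a direct bijection remains open.

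The paper's argument is instead indirect and purely enumerative: the first set has cardinality $\gamma_{\M,i,j}$ by Theorem~\ref{thm:pargamma} (equation~\eqref{pargamma-coef}), while the second set has the same cardinality by the interpretation of Yan, Huang and Yang recorded in~\eqref{pargamma:Yan}. Comparing these two formulas for the same $\gamma$-coefficient yields the corollary. The bijection $\Psi$ (and hence $\phi\circ\Psi^{-1}$) is what underlies~\eqref{pargamma-coef}, not $\phi$ alone; but $\phi\circ\Psi^{-1}$ maps $\B_{n,k}$ to $\R_{\M}$, not $\T_{\M}^0$ to $\bQ_{\M}$, so it does not furnish the direct bijection you are seeking either.
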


Note that our bijection $\phi$ does restrict to a bijection from $\T_{\M}^0$ to $\overline{\Q}_{\M}$, but not to the subsets refined by the statistics. Therefore, it remains an interesting problem to find a direct bijection that proves the corollary above.

\begin{proof}[Proof of Theorem~\ref{thm:pargamma}]
Basing on the expression in \eqref{gf:Bnk} and Foata-Strehl's interpretation \eqref{gamma-coef:biEuler} for the $\gamma$-coefficients of $A_n(x,y)$, we see that $(M-n+1)\gamma_{\M,i,j}$, i.e., the coefficient of $z^i(xy)^j(x+y)^{M+1-i-2j}$ in $(M-n+1)\overline{Q}_{\M}(x,y,z)$, is precisely the cardinality of the set
\begin{align*}
\Gamma_{n,k,i,j}:=\{\Pi\in\B_{n,k}:\emp(\Pi)=i,\des(\Pi)=j,\ddes(\Pi)=0\},
\end{align*}
where $\ddes(\Pi)=\sum_{\ell}\ddes(\pi^{(\ell)})$, with the sum running over all blocks $\pi^{(\ell)}$ of $\Pi$. We trust the reader to verify the following fact.
\begin{Fact}
The composition $\phi\circ\Psi^{-1}:\B_{n,k}\rightarrow \R_{\M}$ is a bijection that sends the triple statistics $(\emp,\des,\ddes)$ over partitions from $\B_{n,k}$ to $(\plat,\sdes,\sddes)$ over rooted permutations from $\R_{\M}$.
\end{Fact}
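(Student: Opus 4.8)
The plan is to use the tree $T:=\Psi^{-1}(\Pi)$ as a common pivot: writing $(\pi,k_j)=\phi(T)$, one reads off \emph{both} the block statistics of $\Pi=\Psi(T)$ and the sibling statistics of $\pi$ from one and the same combinatorial datum, namely the sequences of distinct labels of the edges ending at each vertex of $T$. Bijectivity is free, since $\phi$ (Theorem~\ref{thm:bijTtoR}) and $\Psi$ (Theorem~\ref{thm:FL}) are both bijections, hence so is $\phi\circ\Psi^{-1}$; the content is entirely in the statistics, and the guiding principle is that all three can be computed locally, vertex by vertex.

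For the plateau statistic I would simply compose two identities already in hand: the proof of Theorem~\ref{thm:FL} records $\lleaf(T)=\emp(\Psi(T))$, while \eqref{lleaf=plat} gives $\lleaf(T)=\plat(\pi)$, whence $\emp(\Pi)=\plat(\pi)$. For the descent statistic I would invoke the construction of the lift $\Psi$: each block $\pi^{(i)}$ is order isomorphic to the word $L_i$ of distinct labels of the edges ending at vertex $i$ of $T$, read from left to right, so $\des(\pi^{(i)})$ equals the number of descents inside $L_i$ plus one ending descent exactly when $L_i\neq\epsilon$, i.e.\ when $i$ is internal. I would then match this against $\sdes(\pi)$ by classifying sibling descents along the preorder walk: a type~I sibling descent sits exactly at a descending transition between consecutive child-groups at a vertex (last copy of $g$ followed by first copy of $g'$ with $g>g'$), and a type~II sibling descent occurs exactly once per internal vertex, at the instant the walk finishes the rightmost child-group and returns along the parent edge (terminating at the appended $0$ in the case of the root). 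Summing over all vertices yields $\sdes(\pi)=\des(\Pi)$.

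The hard part, and where I expect the real work, is the identity $\ddes(\Pi)=\sddes(\pi)$. The natural idea is that a double descent of a block at a middle entry $g_j$, with $g_{j-1}>g_j>g_{j+1}$ (reading the ending $0$ for the last entry), records two consecutive descending transitions among the child-groups at one vertex $v$: the transition \emph{into} the group $g_j$ (a type~I sibling descent at the last copy of $g_{j-1}$) and the transition \emph{out of} $g_j$ (a sibling descent at the last copy of $g_j$, of type~I if $g_{j+1}$ is a genuine next group and of type~II if $g_j$ is rightmost at $v$). One would then want these two sibling descents to occupy \emph{consecutive} indices of $\pi$, so that together they form a double sibling descent whose first member is of type~I, precisely as Definition~\ref{def:sdes} demands, and conversely. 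The main obstacle is exactly this adjacency: the two flanking sibling descents are separated by the entire span that $g_j$ occupies in $\pi$, so they become consecutive only when the group $g_j$ fills a single position — which forces a careful analysis of the preorder walk across $g_j$, of the first and last copies of $g_j$, and of the effect of the adjustment rules (i)--(iii) from the proof of Theorem~\ref{thm:bijTtoR}, with special attention to the boundary instances (the rightmost group giving a type~II exit, the root whose exit is the appended $0$, and the integer-labeled leaves carrying the plateaus). Establishing this local correspondence as bijective and exhaustive, while checking that no other consecutive pair of sibling descents can arise, is the delicate step on which the whole Fact rests.
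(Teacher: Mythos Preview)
The paper gives no argument for this Fact, writing only ``We trust the reader to verify the following fact.'' Your plan for $\emp\to\plat$ is correct, and your local count for $\des\to\sdes$ is on the right track. But the obstacle you isolate for $\ddes\to\sddes$---that the two sibling descents bracketing a child-group $g_j$ become consecutive in $\pi$ only when $g_j$ fills a single position---is not merely delicate; it is fatal. The Fact as stated is false.

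Take $\M=\{1^2,2^2\}$ (so $n=2$, $k=3$) and the regular tree $T\in\T_{\M}^0$ whose root $0$ has two leaf children with edges labeled $2,1$ from left to right. Then $\phi(T)=(2211,0)$ and $\Psi(T)=(21,\epsilon,\epsilon)$. In the block $21$ there is one double descent (at position~$2$, since $2>1>0$), so $\ddes(\Pi)=1$. But in $\pi=2211$ the sibling descents sit at positions $2$ (type~I) and $4$ (type~II), which are not adjacent, hence $\sddes(\pi)=0$. This is exactly the scenario you anticipated: the middle label $g_j=1$ has multiplicity~$2$, so its first and last copies occupy two distinct positions and the would-be double sibling descent is pulled apart. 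The discrepancy persists downstream: for $\M=\{1^2,2^2,3^2\}$ one computes $\gamma_{\M,3,2}=2$ from the expansion of $A_3(x,y)$, yet there are four quasi-Stirling permutations with $\plat=3$, $\sdes=2$, $\sddes=0$ (namely $113322$, $221133$, $223311$, $331122$), so the interpretation \eqref{pargamma-coef} fails as written too.

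Hence no careful analysis can close this gap; the statement itself appears to require either a corrected definition of $\sddes$ or a different bijection. Your diagnosis of where the argument might break was exactly right---what is missing is only the recognition that it actually does.
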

Relying on this fact, we deduce that
\begin{align*}
\gamma_{\M,i,j} &= \frac{\# \Gamma_{n,k,i,j}}{M-n+1}= \frac{\# \{(\pi,r_t)\in\R_{\M}: \plat(\pi)=i,\sdes(\pi)=j,\sddes(\pi)=0\}}{M-n+1}\\
&= \#\{\pi\in\overline{\Q}_{\M}: \plat(\pi)=i,\sdes(\pi)=j,\sddes(\pi)=0\}.
\end{align*}
The second line uses the fact that the values of the three statistics $\plat,\sdes,\sddes$ are irrelevant to the root label $r_t$.
\end{proof}

\section*{Acknowledgement}
Both authors were supported by the National Natural Science Foundation of China grant 12171059.

\end{document}